\newtheorem{theorem}{Theorem}
\newtheorem{lemma}{Lemma}
\newtheorem{corollary}{Corollary}
\newtheorem{proposition}{Proposition}
\begin{document}
\begin{frontmatter}

\title{Margin-adaptive model selection in statistical learning}
\runtitle{Margin-adaptive model selection}

\begin{aug}
\author[1]{\fnms{Sylvain} \snm{Arlot}\thanksref{1}\corref{}\ead[label=e1]{sylvain.arlot@ens.fr}}
\and
\author[2]{\fnms{Peter L.} \snm{Bartlett}\thanksref{2}\ead[label=e2]{bartlett@cs.berkeley.edu}}

\runauthor{S. Arlot and P. L. Bartlett}

\address[1]{CNRS,
Willow Project-Team,
Laboratoire d'Informatique de l'Ecole Normale Superieure
(CNRS/ENS/INRIA UMR 8548),
23, avenue d'Italie,
CS 81321,
75214 Paris Cedex 13,
France.
\printead{e1}}
\address[2]{Computer Science Division and Department of Statistics,
University of California, Berkeley,
367 Evans Hall \#3860,
Berkeley, CA 94720-3860,
USA.
\printead{e2}}
\end{aug}

% HISTORY:

% ABSTRACT
%
\begin{abstract}
A classical condition for fast learning rates is the margin condition,
first introduced by Mammen and Tsybakov. We tackle in this paper the
problem of adaptivity to this condition in the context of model
selection, in a general learning framework. Actually, we consider a
weaker version of this condition that allows one to take into account
that learning within a small model can be much easier than within a large
one. Requiring this ``strong margin adaptivity'' makes the model
selection problem more challenging. We first prove, in a general
framework, that some penalization procedures (including local
Rademacher complexities) exhibit this adaptivity when the models are
nested. Contrary to previous results, this holds with penalties that
only depend on the data. Our second main result is that strong margin
adaptivity is not always possible when the models are not nested: for
every model selection procedure (even a randomized one), there is a
problem for which it does not demonstrate strong margin adaptivity.
\end{abstract}

% KEYWORDS
%
\begin{keyword}
\kwd{adaptivity}
\kwd{empirical minimization}
\kwd{empirical risk minimization}
\kwd{local Rademacher complexity}
\kwd{margin condition}
\kwd{model selection}
\kwd{oracle inequalities}
\kwd{statistical learning}
\end{keyword}

\end{frontmatter}

%s1 ###
\section{Introduction} \label{sec.setting}
We consider in this paper the model selection problem in a general
framework. Since our main motivation comes from the supervised binary
classification setting, we focus on this framework in this
introduction. Section \ref{sec.gal-set} introduces the natural
generalization to empirical (risk) minimization problems, which we
consider in the remainder of the paper.

We observe independent realizations $(X_i,Y_i) \in\mathcal{X}\times
\mathcal{Y}$ for
$i =
1, \ldots, n$ of a random variable with distribution $P$, where
$\mathcal{Y}=
\{ 0,1 \}$. The goal is to build a (data-dependent) predictor $t$
(i.e., a~measurable function $\mathcal{X}\mapsto\mathcal{Y}$) such
that $t(X)$ is as often
as possible equal to $Y$, where $(X,Y) \sim P$ is independent
from the
data. This is the \textit{prediction} problem, in the setting of
supervised binary classification. In other words, the goal is to find
$t$ minimizing the prediction error $P \gamma
( t;\cdot) :=\mathbb{P}_{(X,Y) \sim P} ( t(X) \neq Y )$,
where $\gamma$ is the 0--1 loss.

The minimizer $s$ of the prediction error, when it exists, is
called the Bayes predictor. Define the regression function $\eta(X) =
\mathbb{P}_{(X,Y) \sim P} ( Y = 1 \vert X )$. Then, a classical
argument shows that $s(X) = \mathbh{1}_{\eta(X) \geq1/2}$. However,
$s$ is unknown, since it depends on the unknown distribution $P$.
Our goal is to build from the data some predictor $t$ minimizing the
prediction error, or equivalently the excess loss $\ell(s, t ) :=P
\gamma( t ) - P \gamma( s)$.

A classical
approach to the prediction problem is \textit{empirical risk
minimization}. Let $ P_n = n^{-1} \sum_{i=1}^n \delta_{(X_i,Y_i)} $ be
the empirical measure and $S_m$ be any set of predictors, which is
called a \textit{model}. The \textit{empirical risk minimizer} over
$S_m$ is then defined as
\[
\hat{s}_m \in\arg\min_{t \in S_m} P_n \gamma( t ) = \arg\min
_{t \in S_m} \Biggl\{\frac{1}{n} \sum_{i=1}^n \mathbh{1}_{t(X_i)
\neq
Y_i}\Biggr\}.
\]
We expect that the risk of $\hat{s}_m$ is close to that of
\[
s_m \in\arg\min_{t \in S_m} P\gamma( t ),
\]
assuming that such a minimizer exists.

%s1.1 ###
\subsection{Margin condition} \label{sec.setting.margin}
Depending on some properties of $P$ and the complexity of $S_m$, the
prediction error of $\hat{s}_m$ is more or less distant from that of
$s_m$. For instance, when $S_m$ has a finite Vapnik--Chervonenkis
dimension $V_m$ \cite{Vap1998,vcucrfep71} and $s\in S_m$, it has
been proven (see, e.g., \cite{Lug2002}) that
\[
\mathbb{E}[ \ell(s, \hat{s}_m ) ] \leq C \sqrt{\frac{V_m}{n}}
\]
for some numerical constant $C>0$. This is optimal without any
assumption on $P$, in the minimax sense: no estimator can have a
smaller prediction risk uniformly over all distributions $P$ such that
$s\in S_m$, up to the numerical factor $C$ \cite{DevLug1995}.

However, there exist favorable situations where much smaller prediction
errors (``fast rates'', up to $n^{-1}$ instead of $n^{-1/2}$) can be
obtained. A sufficient condition, the so-called ``margin condition'',
has been introduced by Mammen and Tsybakov \cite{MamTsy1999}. If, for
some $\varepsilon_0,C_0>0$ and $\alpha\geq1$,
%
%e1 ###
%
\begin{equation} \label{eq.margin.global.suff1}
\forall\varepsilon\in(0,\varepsilon_0]\qquad\mathbb{P}\bigl(| 2
\eta(X) - 1 | \leq\varepsilon\bigr) \leq C_0 \varepsilon^{\alpha},
\end{equation}
if the Bayes predictor $s$ belongs to $S_m$, and if $S_m$ is a
VC-class of dimension $V_m$, then the prediction error of $\hat{s}_m$ is
smaller than $L(C_0,\varepsilon_0,\alpha) \ln(n) ( V_m / n )^{\kappa
/(2\kappa- 1)}$ in expectation, where $\kappa=
(1+\alpha)/\alpha$ and $L(C_0,\varepsilon_0,\alpha)>0$ only depends on
$C_0$, $\varepsilon_0$ and $\alpha$. Corresponding minimax lower bounds
\cite{MasNed2003} and other upper bounds can be obtained under other
complexity assumptions (e.g., Assumption (A2) of Tsybakov
\cite{Tsy2004}, involving bracketing entropy). In the extreme situation
where $\alpha= + \infty$, that is, for some $h>0$,
%
%e2 ###
%
\begin{equation} \label{eq.margin.global.suff2}
\mathbb{P}\bigl(| 2 \eta(X) - 1 | \leq h\bigr) = 0,
\end{equation}
then the same result holds with $\kappa= 1$ and $L(h) \propto h^{-1}$.
More precisely, as proved in \cite{MasNed2003}
\[
\mathbb{E}[ \ell(s, \hat{s}_m ) ] \leq C \min\Biggl\{\biggl
(\frac
{V_m( 1+ \ln( n h^2 V_m^{-1} ) )}{n
h}\biggr),\sqrt{\frac{V}{n}}\Biggr\} .
\]

Following the approach of Koltchinskii \cite{Kol2006}, we will consider
the following generalization of the margin condition:
%
%e3 ###
%
\begin{equation} \label{eq.margin.global}
\forall t \in S\qquad\ell(s, t ) \geq\varphi\bigl(\sqrt
{\operatorname{var}_P
\bigl(\gamma( t;\cdot) - \gamma( s;\cdot)\bigr)}\bigr),
\end{equation}
where $S$ is the set of predictors, and $\varphi$ is a convex
non-decreasing function on $[0,\infty)$ with $\varphi(0)=0$.
Indeed, the proofs of the above upper bounds on the prediction error of
$\hat{s}_m$ use only that (\ref{eq.margin.global.suff1}) implies
(\ref{eq.margin.global}) with $\varphi(x) = L(C_0,\varepsilon
_0,\alpha) x^{2\kappa}$ and $\kappa= (1+\alpha)/\alpha$, and that
(\ref{eq.margin.global.suff2}) implies (\ref{eq.margin.global}) with
$\varphi(x) = h x^2$. (See, e.g., Proposition 1 in \cite{Tsy2004}.)

All these results show that the empirical risk minimizer is
\textit{adaptive to the margin condition}, since it leads to an
optimal excess
risk under various assumptions on the complexity of $S_m$. However,
obtaining such rates of estimation requires knowledge of some $S_m$ to
which the Bayes predictor belongs, which is a strong assumption.

A less restrictive framework is the following. First, we do not assume
that $s\in S_m$.
Second, we do not assume that the margin condition
(\ref{eq.margin.global}) is satisfied for all $t \in S $, but only for
$t \in S_m $, which can be seen as a ``local'' margin condition:
%e4 ###
%
\begin{equation} \label{eq.margin.local}
\forall t \in S_m\qquad\ell(s, t ) \geq\varphi_m
\bigl(\sqrt{\operatorname{var}_P \bigl(\gamma( t ; \cdot) -
\gamma( s; \cdot)\bigr)}\bigr),
\end{equation}
where $\varphi_m$ is a convex non-decreasing function on $[0,\infty)$
with $\varphi_m(0)=0 $. The fact that $\varphi_m$ can depend on $m$
allows situations where we are lucky to have a strong margin condition
for some small models but the global margin condition is loose. As
proven in Section \ref{sec.ex} (Proposition
\ref{le.local.vs.global.margin}), such situations certainly exist.

Note that when $\varphi_m(x)=h_m x^2$, (\ref{eq.margin.global}) and
(\ref{eq.margin.local}) can be traced back to mean--variance conditions
on $\gamma$ that were used in several papers for deriving convergence
rates of some minimum contrast estimators on some given model $S_m$
(see, e.g., \cite{BirMas1998} and references therein).

%s1.2 ###
\subsection{Adaptive model selection} \label{sec.setting.mod-sel}
Assume now that we are not given a single model but a whole family
$( S_m )_{m \in\mathcal{M}_n}$. By empirical risk minimization, we
obtain a
family $( \hat{s}_m )_{m \in\mathcal{M}_n}$ of predictors, from
which we would like
to select some $\widehat{s}_{\widehat{m}}$ with a prediction error
$P\gamma( \widehat{s}_{\widehat{m}} )$ as small as possible.
The aim of such a \textit{model selection procedure} $((X_1,Y_1),
\ldots, (X_n,Y_n)) \mapsto\widehat{m}\in\mathcal{M}_n$ is to
satisfy an \textit{oracle
inequality} of the form
%
%e5 ###
%
\begin{equation} \label{eq.oracle}
\ell(s, \widehat{s}_{\widehat{m}} ) \leq C \inf_{m \in\mathcal
{M}_n} \{ \ell(s, s_m ) + R_{m,n} \},
\end{equation}
where the leading constant $C \geq1$ should be close to one and the
remainder term $R_{m,n}$ should be close to $P\gamma( \hat{s}_m ) -
P\gamma( s_m )$. Typically, one proves that (\ref{eq.oracle})
holds either in expectation, or with high probability.

Assume for instance that $\varphi_m(x) = h_m x^2$ for some $h_m >0$ and
$S_m$ has a finite VC-dimension $V_m \geq1$. In view of the
aforementioned minimax lower bounds of \cite{MasNed2003}, one cannot
hope in general to prove an oracle inequality (\ref{eq.oracle}) with a
remainder $R_{m,n}$ smaller than
\[
\min\Biggl\{\frac{\ln(n) V_m}{n h_m},\sqrt{\frac{V_m}{n}}\Biggr\},
\]
where the $\ln(n)$ term may only be necessary for some VC classes $S_m$
(see \cite{MasNed2003}).

Then, \textit{adaptive model selection} occurs when $\widehat{m}$
satisfies an
oracle inequality (\ref{eq.oracle}) with $R_{m,n}$ of the order of this
minimax lower bound.
More generally, let $C_m$ be some complexity measure of $S_m$ (e.g.,
its VC-dimension, or the $\rho$ appearing in Tsybakov's assumption
\cite{Tsy2004}). Then, define $R_n(C_m,\varphi_m)$ as the minimax
prediction error over the set of distributions $P$ such that $s
\in S_m$ and the local margin condition (\ref{eq.margin.local}) is
satisfied in $S_m$ with $\varphi_m$, where $S_m$ has a complexity at
most $C_m$.
Massart and N\'ed\'elec \cite{MasNed2003} have proven tight upper and
lower bounds on $R_n(C_m,\varphi_m)$ with several complexity measures;
their results are stated with the margin condition
(\ref{eq.margin.global}), but they actually use its local version
(\ref{eq.margin.local}) only.

A margin adaptive model selection procedure should satisfy an oracle
inequality of the form
%e6 ###
%
\begin{equation} \label{eq.oracle.margin}
\ell(s, \widehat{s}_{\widehat{m}} ) \leq C \inf_{m \in\mathcal
{M}_n} \{ \ell(s, s_m ) + R_n(C_m,\varphi_m) \}
\end{equation}
\textit{without using the knowledge of $C_m$ and $\varphi_m$.} We call
this property ``strong margin adaptivity'', to emphasize the fact that
this is more challenging than adaptivity to a margin condition that
holds uniformly over the models.

%s1.3 ###
\subsection{Penalization} \label{sec.setting.pen}

We focus in particular in this paper on \textit{penalization}
procedures, which are defined as follows. Let $\operatorname{pen}\dvtx
\mathcal{M}_n
\mapsto[0,\infty)$ be a (data-dependent) function, and define
\[
\widehat{m}\in\arg\min_{m \in\mathcal{M}_n} \{ P_n \gamma(
\hat{s}_m ) + \operatorname{pen} (m) \}.
\]
Since our goal is to minimize the prediction error of $\hat{s}_m$, the
\textit{ideal penalty} would be
%e7 ###
%
\begin{equation} \label{eq.penid}
\operatorname{pen}_{\mathrm{id}}(m) :=P \gamma( \hat{s}_m ) -
P_n \gamma( \hat{s}_m ),
\end{equation}
but it is unknown because it depends on the distribution $P$. A
classical way of designing a penalty is to estimate $\operatorname
{pen}_{\mathrm{id}}(m)$, or at
least a tight upper bound on it.

We consider in particular \textit{local complexity measures}
\cite{LugWeg2004,BarMenPhi2004,BarBouMen2005,Kol2006}, because they
estimate $\operatorname{pen}_{\mathrm{id}}$ tightly
enough to achieve fast estimation rates when the margin condition holds
true. See Section~\ref{sec.setting.loc-complex} for a detailed
definition of these penalties.

%s1.4 ###
\subsection{Related results} \label{sec.setting.biblio}
There is a considerable wealth of literature on margin adaptivity in
the context of model selection as well as model aggregation.
Most of the papers consider the uniform margin condition, that is, when
$\varphi_m \equiv\varphi$.
Barron, Birg\'e and Massart \cite{BarBirMas1999} have proven oracle
inequalities for deterministic penalties under some mean--variance
condition on $\gamma$ close to (\ref{eq.margin.global}) with
$\varphi(x) = h x^2$.
Following a similar approach, margin adaptive oracle inequalities (with
more general $\varphi$) have been proven with localized random
penalties \cite{LugWeg2004,BarMenPhi2004,BarBouMen2005,Kol2006} and
with other penalties in a
particular framework~\cite{TsyvdG2005}.

Adaptivity to the margin has also been considered with a regularized
boosting method \cite{BlaLugVay2003}, the hold-out \cite{BlaMas2006}
and in a PAC-Bayes framework \cite{Aud2004PMA908}. Aggregation methods
have been studied in \cite{Tsy2004} and \cite{Lec2007c}. Notice also
that a completely different approach is possible: estimate first the
regression function $\eta$ (possibly through model selection), then use
a plug-in classifier; this works provided $\eta$ is smooth enough
\cite{AudTsy2007}.

It is quite unclear whether any of these results can be extended to
strong margin adaptivity (actually, we will prove that this needs
additional restrictions in general). To our knowledge, the only results
allowing $\varphi_m$ to depend on $m$ can be found in \cite{Kol2006}.
First, when the models are nested, a comparison method based on local
Rademacher complexities attains strong margin adaptivity, assuming that
$s\in\bigcup_{m \in\mathcal{M}_n} S_m$ (Theorem 7; and it is quite unclear
whether this still holds without the latter assumption). Second, a
penalization method based on local Rademacher complexities has the same
property in the general case, but it uses the knowledge of
$( \varphi_m )_{m \in\mathcal{M}_n}$ (Theorems 6 and 11).

Our claim is that when $\varphi_m$ does strongly depend on $m$, it is
crucial to take it into account to choose the best model in $\mathcal
{M}_n$. And
such situations occur, as proven by our Proposition
\ref{le.local.vs.global.margin} in Section \ref{sec.ex}. But assuming
either $s\in\bigcup_{m \in\mathcal{M}_n} S_m$ or that $\varphi_m$
is known is
not realistic. Our goal is to investigate the kind of results that can
be obtained with \textit{completely data-driven} procedures; in
particular, when $s\notin\bigcup_{m \in\mathcal{M}_n} S_m$.

%s1.5 ###
\subsection{Our results}

In this paper, we aim at understanding when strong margin adaptivity
can be obtained for data-dependent model selection procedures. Notice
that we do not restrict ourselves to the classification setting. We
consider a much more general framework (as in \cite{Kol2006}), which is
described in Section \ref{sec.gal-set}. We prove two kinds of results.
First, when models are nested, we show that some penalization methods
are strongly margin adaptive (Theorem \ref{th.nested}). In particular,
this result holds for the local Rademacher complexities (Corollary~\ref{cor.nested.locRad}). Compared to previous results (in particular
the ones of \cite{Kol2006}), our main advance is that our penalties do
not require the knowledge of~$( \varphi_m )_{m \in\mathcal{M}_n}$,
and we do not
assume that the Bayes predictor belongs to any of the models.

Our second result probes the limits of strong margin adaptivity,
without the nested assumption. A family of models exists such that, for
every sample size $n$ and every (model) selection procedure $\widehat
{m}$, a
distribution $P$ exists for which $\widehat{m}$ fails to be strongly margin
adaptive with a positive probability (Theorem \ref{th.non-nested.new}).
Hence, the previous positive results (Theorem \ref{th.nested} and
Corollary~\ref{cor.nested.locRad}) cannot be extended outside of the
nested case for a general distribution $P$.

Where is the boundary between these two extremes? Obviously, the nested
assumption is not necessary. For instance, when the global margin
assumption is indeed tight ($\varphi= \varphi_m$ for every $m \in
\mathcal{M}_n$),
margin adaptivity can be obtained in several ways, as mentioned in
Section \ref{sec.setting.biblio}. We sketch in Section
\ref{sec.oracle.gal} some situations where strong margin adaptivity is
possible. More precisely, we state a general oracle inequality (Theorem
\ref{th.gal.oracle}), valid for any family of models and any
distribution $P$. We then discuss assumptions under which its remainder
term is small enough to imply strong margin adaptivity.

This paper is organized as follows. We describe the general setting in
Section \ref{sec.gal-set}. We consider in Section \ref{sec.nested} the
nested case, in which strong margin adaptivity holds. Negative results
(i.e., lower bounds on the prediction error of a general model
selection procedure) are stated in Section~\ref{sec.non-nested}. The
line between these two situations is sketched in Section
\ref{sec.oracle.gal}. We discuss our results in Section~\ref{sec.discussion}. All the proofs are given in Section \ref{sec.proofs}.

%s2 ###
\section{The general empirical minimization framework} \label{sec.gal-set}

Although our main motivation comes from the classification problem, it
turns out that all our results can be proven in the general setting of
empirical minimization. As explained below, this setting includes
binary classification with the 0--1 loss, bounded regression and
several other frameworks. In the rest of the paper, we will use the
following general notation, in order to emphasize the generality of our
results.

We observe independent realizations $\xi_1, \ldots, \xi_n \in\Xi$
of a
random variable with distribution $P$, and we are given a set $\mathcal
{F}$ of
measurable functions $\Xi\mapsto[0,1]$. Our goal is to build some
(data-dependent) $f$ such that its expectation $P(f) :=\mathbb{E}_{\xi
\sim P} [ f(\xi) ]$ is as small as possible. For the sake of
simplicity, we assume that there is a minimizer $f^{\star}$ of $P(f)$ over
$\mathcal{F}$.

This includes the prediction framework, in which $\Xi= \mathcal
{X}\times\mathcal{Y}$,
$\xi_i = (X_i,Y_i)$,
\[
\mathcal{F}:=\{ \xi\mapsto\gamma( t ; \xi) \mbox{ s.t. }t \in S
\},
\]
where $\gamma:S\times\Xi\mapsto[0,1]$ is any contrast function. Then,
$f^{\star}$ is equal to $\gamma( s; \cdot)$, where $s$
is the
Bayes predictor.
In the binary classification framework, $\mathcal{Y}= \{ 0,1 \}$ and
we can
take the 0--1 contrast $\gamma(t;(x,y)) = \mathbh{1}_{t(x) \neq y},$ for
instance.
We then recover the setting described in Section \ref{sec.setting}. In
the bounded regression framework, assuming that $\mathcal{Y}= [0,1]$,
we can
take the least-squares contrast,
\[
\gamma(t;(x,y)) = \bigl( t(x) - y \bigr)^2.
\]
Many other contrast functions $\gamma$ can be considered, provided that
they take their values in $[0,1]$.
Notice the one-to-one correspondence between predictors $t$ and
functions $\overline{f}_t :=\gamma( t;\cdot)$ in the prediction
framework.

The \textit{empirical minimizer} over $\mathcal{F}_m \subset\mathcal
{F}$ (called a model)
can then be defined as
\[
\widehat{f}_m \in\arg\min_{f \in\mathcal{F}_m} P_n (f).
\]
We expect that its expectation $P(\widehat{f}_m)$ is close to that of
$f_m \in
\arg\min_{f \in\mathcal{F}_m} P(f) $, assuming that such a
minimizer exists. In
the prediction framework, defining $\mathcal{F}_m :=\{ \overline{f}_t
\mbox{ s.t. }t \in S_m \}$, we have $\widehat{f}_m = \overline
{f}_{\hat{s}_m}$ and $f_m =
\overline{f}_{s_m}$.

We can now write the global margin condition as follows:
%e8 ###
%
\begin{equation} \label{eq.margin.global.gal}
\forall f \in\mathcal{F}\qquad P(f - f^{\star}) \geq\varphi
\bigl( \sqrt{\operatorname{var}_P (f - f^{\star})}\bigr ),
\end{equation}
where $\varphi$ is a
convex non-decreasing function on $[0,\infty)$ with $\varphi(0)=0$.
Similarly, the local margin condition is
%e9 ###
%
\begin{equation} \label{eq.margin.local.gal}
\forall f \in\mathcal{F}_m\qquad P(f - f^{\star}) \geq\varphi_m
\bigl( \sqrt{\operatorname{var}_P (f - f^{\star})} \bigr).
\end{equation}
Notice that most of the upper and lower bounds on the risk under the
margin condition given in the introduction stay valid in the general
empirical minimization framework, at least when $\varphi_m(x) = (h_m
x^{2})^{\kappa_m}$ for some $h_m>0$ and $\kappa_m \geq1$ (see, e.g.,
\cite{MasNed2003} and \cite{Kol2006}).
Assume that $\mathcal{F}_m$ is a~VC-type class of dimension $V_m$. If
$\varphi_m(x) = h_m x^2$,
\[
\mathbb{E}[ P( \widehat{f}_m - f^{\star}) ] \leq2 P(f_m - f^{\star
}) + C \min\Biggl\{\biggl( \frac{\ln(n) V_m}{n h_m} \biggr),\sqrt{\frac
{V_m}{n}}\Biggr\}
\]
for some numerical constant $C>0$. If $\varphi_m(x) = ( h_m x^2
)^{\kappa_m}$ for some $h_m>0$ and $\kappa_m \geq1$,
\[
\mathbb{E}[ P( \widehat{f}_m - f^{\star}) ] \leq2 P(f_m - f^{\star
}) + C \min\Biggl\{\biggl[ L(h_m,\kappa_m) \ln(n) \biggl( \frac{ V_m}{n h_m}
\biggr)^{\kappa_m/(2 \kappa_m - 1)} \biggr],\sqrt{\frac{V_m}{n}}\Biggr
\}
\]
for some constants $C,L(h_m,\kappa_m)>0$.

Given a collection $( \mathcal{F}_m )_{m \in\mathcal{M}_n}$ of
models, we are looking for
a model selection procedure $(\xi_1, \ldots, \xi_n) \mapsto\widehat
{m}\in
\mathcal{M}_n$
satisfying an \textit{oracle inequality} of the form
%e10 ###
%
\begin{equation} \label{eq.oracle.gal}
P ( \widehat{f}_{\widehat{m}} - f^{\star}) \leq C \inf_{m \in
\mathcal{M}_n} \{ P ( f_m - f^{\star}) + R_{m,n} \},
\end{equation}
with a leading constant $C$ close to 1 and a remainder term $R_{m,n}$
as small as possible. Similarly to (\ref{eq.oracle.margin}), we define
a strongly margin-adaptive procedure as any $\widehat{m}$ such that
(\ref{eq.oracle.gal}) holds with some numerical constant $C$, and
$R_{m,n}$ of the order of the minimax risk $R_n(C_m,\varphi_m)$, where
$C_m$ is some complexity measure of $\mathcal{F}_m$.

Defining penalization methods as
%e11 ###
%
\begin{equation} \label{eq.pen}
\widehat{m}\in\arg\min_{m \in\mathcal{M}_n} \{ P_n ( \widehat
{f}_m ) + \operatorname{pen}(m) \}
\end{equation}
for some data-dependent $\operatorname{pen}\dvtx \mathcal{M}_n \mapsto
\mathbb{R}$, the ideal penalty is
$\operatorname{pen}_{\mathrm{id}}(m) :=(P-P_n) ( \widehat{f}_m )$.

%s3 ###
\section{Margin-adaptive model selection for nested models} \label{sec.nested}
%
%s3.1 ###
\subsection{General result}
Our first result is a sufficient condition for penalization procedures
to attain strong margin adaptivity when the models are nested (Theorem
\ref{th.nested}). Since this condition is satisfied by local Rademacher
complexities, this leads to a data-driven margin-adaptive penalization
procedure (Corollary \ref{cor.nested.locRad}).

\begin{theorem} \label{th.nested}
Fix $( \mathcal{F}_m )_{m \in\mathcal{M}_n}$ and $( \varphi_m )_{m
\in\mathcal{M}_n}$ such that
the local margin conditions (\ref{eq.margin.local.gal}) hold. Let
$( t_m )_{m \in\mathcal{M}_n}$ be a sequence of positive reals that is
non-decreasing (with respect to the inclusion ordering on
$\mathcal{F}_m$). Assume that some constants $c,\eta\in
(0,1)$ and
$C_1,C_2 \geq0$ exist such that the following holds:
\begin{itemize}
\item The models $( \mathcal{F}_m )_{m \in\mathcal{M}_n}$ are nested.
\item Lower bounds on the penalty: with probability at least $1 - \eta
$, for every $m,m^{\prime} \in\mathcal{M}_n$,
\begin{eqnarray} \label{eq.pen.minor.nested}
&&\hspace*{32.3pt}(1-c) \operatorname{pen}(m) \geq(P - P_n) ( \widehat{f}_m - f_m ) +
\frac{t_m}{n}
\geq0,
\\ \label{eq.nested.min-pen}
&&\mathcal{F}_{m^{\prime}} \subset\mathcal{F}_m \Rightarrow c
\operatorname{pen}(m) \geq v(m)
- C_1 v(m^{\prime}) - C_2 P( f_{m^{\prime}} - f^{\star}),\nonumber\\
[-8pt]\\ [-8pt]
&&\qquad\mbox{where } v(m) :=\sqrt{ \frac{2t_m}{n} \operatorname{var}_P
(f_m - f^{\star})}.\nonumber
\end{eqnarray}
\end{itemize}
Then, if $\widehat{m}$ is defined by (\ref{eq.pen}), with probability
at least
$1 - \eta- 2 \sum_{m \in\mathcal{M}_n} \mathrm{e}^{-t_m}$, we have for every
$\varepsilon
\in(0,1)$
%e12 ###
%
\begin{eqnarray} \label{eq.nested}
\hspace*{-10pt}P ( \widehat{f}_{\widehat{m}} - f^{\star}) &\leq&\frac{1}{1 -
\varepsilon} \inf
_{m \in\mathcal{M}_n} \Biggl\{ (1 + \varepsilon+ C_2 + \varepsilon
C_1) P ( f_{m}
- f^{\star}) + \operatorname{pen}(m)\nonumber  \\ [-8pt]\\ [-8pt]
 &&\hphantom{\frac{1}{1 -
\varepsilon} \inf
_{m \in\mathcal{M}_n} \Biggl\{}{}+ ( 1 + \max\{ 1,C_1 \} ) \min\Biggl\{\varphi_m^{\star} \Biggl(
\sqrt{ \frac{2 t_m}{ \varepsilon^2 n}} \Biggr),\sqrt{\frac{2
t_m}{n}}\Biggr\} +
\frac{t_m}{3n} \Biggr\} ,\nonumber
\end{eqnarray}
where $\varphi_m^{\star}(x) :=\sup_{y \geq0} \{ x y - \varphi_m(y)
\} $ is the convex conjugate of $\varphi_m$.
\end{theorem}

Theorem \ref{th.nested} is proved in Section \ref{sec.proofs.oracle}.

\begin{remark}
\begin{enumerate}
\iffalse
%
\item There exists some completely data-dependent penalties satisfying
the main assumptions (\ref{eq.pen.minor.nested}) and
(\ref{eq.nested.min-pen}) (with some reasonably small $\eta$). See
Section \ref{sec.setting.loc-complex}, in particular Corollary
\ref{cor.nested.locRad}. However, we would like to emphasize that
Theorem \ref{th.nested} is valid whatever the particular definition of
the penalty, as soon as (\ref{eq.pen.minor.nested}) and
(\ref{eq.nested.min-pen}) hold. \fi

\item If $\operatorname{pen}(m)$ is of the right order, that is, not
much larger than
$\mathbb{E}[ \operatorname{pen}_{\mathrm{id}}(m) ]$, then Theorem
\ref{th.nested} is a strong margin
adaptivity result. Indeed, assuming that $\varphi_m(x) = ( h_m x^2
)^{\kappa_m}$, the remainder term is not too large, since
\[
\varphi_m^{\star}(x) = L(h_m,\kappa_m) x^{2 \kappa_m /
(2 \kappa_m - 1)}
\]
for some positive constant $L(h_m,\kappa_m)$. Hence, choosing
$\varepsilon= 1/2,$ for instance, we can rewrite (\ref{eq.nested}) as
\[
P ( \widehat{f}_{\widehat{m}} - f^{\star})\leq L(C_1,C_2)
\inf_{m \in\mathcal{M}_n}
\biggl\{ P ( f_{m} - f^{\star}) + \operatorname{pen}(m) + L(h_m,\kappa_m)
\biggl( \frac{t_m}{n} \biggr)^{\kappa_m/(2\kappa_m - 1)} \biggr\}
\]
for some positive constants $L(C_1,C_2)$ and $L(h_m,\kappa_m)$.
When $\varphi_m$ is a general convex function, minimax estimation rates
are no longer available, so that we do not know whether the remainder
term in (\ref{eq.nested}) is of the right order. Nevertheless, no
better risk bound is known, even for a single model to which $s$
belongs.

\item In the case that the $\varphi_m$ are known, methods involving
local Rademacher complexities and $( \varphi_m )_{m \in\mathcal
{M}_n}$ satisfy
oracle inequalities similar to (\ref{eq.nested}) (see Theorems 6 and 11
in \cite{Kol2006}). On the contrary, the $\varphi_m$ are not assumed to
be known in Theorem \ref{th.nested}, and conditions
(\ref{eq.pen.minor.nested}) and~(\ref{eq.nested.min-pen}) are satisfied
by completely data-dependent penalties, as shown in Section
\ref{sec.setting.loc-complex}. Also, Theorem 7 of \cite{Kol2006} shows
that adaptivity is possible using a comparison method, provided that
$f^{\star}$ belongs to one of the models. However, it is not clear whether
this comparison method achieves the optimal bias--variance trade-off in
the general case, as in Theorem \ref{th.nested}.
\label{rk.nested.tm=t}
\end{enumerate}
\end{remark}

%s3.2 ###
\subsection{Local Rademacher complexities} \label{sec.setting.loc-complex}

Although Theorem \ref{th.nested} applies to any penalization procedure
satisfying assumptions (\ref{eq.pen.minor.nested}) and
(\ref{eq.nested.min-pen}), we now focus on methods based on local
Rademacher complexities. Let us define precisely these complexities. We
mainly use the notation of \cite{Kol2006}:
\begin{itemize}
\item for every $\delta>0$, the $\delta$ minimal set of $\mathcal
{F}_m$ w.r.t.
the distribution $P$ is
\[
\mathcal{F}_{m,P}(\delta) :=\Bigl\{ f \in\mathcal{F}_m \mbox{ s.t.
}P(f) - \inf_{g \in\mathcal{F}_m} P(g) \leq\delta \Bigr\},
\]
\item the $L^2(P)$ diameter of the $\delta$ minimal set of $\mathcal
{F}_m$ is
\[
D_{P}^2(\mathcal{F}_m ; \delta) = \sup_{f,g \in\mathcal{F}_{m,P}
(\delta)} P \bigl((f-g)^2 \bigr),
\]
\item the expected modulus of continuity of $(P-P_n)$ over $\mathcal
{F}_m$ is
\[
\phi_n ( \mathcal{F}_m ; P ; \delta) = \mathbb{E}\sup_{ f,g \in
\mathcal{F}_{m,P} (\delta
)} | (P_n - P) (f - g) |.
\]
\end{itemize}
We then define
\[
U_n (\mathcal{F}_m ; \delta; t) :=\overline{K} \Biggl( \phi_n ( \mathcal
{F}_m ; P
; \delta) + D_P(\mathcal{F}_m ; \delta) \sqrt{ \frac{t}{n} } +
\frac
{t}{n}\Biggr ),
\]
where $\overline{K}>0$ is a numerical constant (to be chosen later).
The (ideal) local complexity $\overline{\delta}_n (\mathcal{F}_m ;
t)$ is
(roughly) the smallest positive fixed point of $r \mapsto U_n (\mathcal
{F}_m ; r
; t)$. More precisely,
%e13 ###
%
\begin{equation} \label{eq.loc-comp.ideal}
\overline{\delta}_n (\mathcal{F}_m ; t) :=\inf\biggl\{ \delta> 0 \mbox
{ s.t. } \sup_{\sigma\geq\delta} \biggl\{ \frac{U_n (\mathcal{F}_m ;
\sigma; t)}{\sigma} \biggr\} \leq\frac{1}{2q} \biggr\},
\end{equation}
where $q >1$ is a numerical constant.

Two important points, which follow from Theorems 1 and 3 of
Koltchinskii \cite{Kol2006}, are that:
\begin{enumerate}
\item$\overline{\delta}_n(\mathcal{F}_m;t)$ is large enough to satisfy
assumption (\ref{eq.pen.minor.nested}) with a probability at least $1 -
\log_q ( n/t ) \mathrm{e}^{-t}$ for each model $m \in\mathcal{M}_n$.
\item There is a
completely data-dependent $\hat{\delta}_n(\mathcal{F}_m;t)$ such that
\[
\forall m \in\mathcal{M}_n\qquad \mathbb{P}\bigl( \hat{\delta
}_n(\mathcal{F}_m;t) \geq\overline
{\delta}_n (\mathcal{F}_m ; t) \bigr) \geq1 - 5 \ln_q\biggl( \frac{n}{t} \biggr)
\mathrm{e}^{-t} .
\]
This data-dependent $\hat{\delta}_n(\mathcal{F}_m;t)$ is a resampling
estimate of $\overline{\delta}_n (\mathcal{F}_m ; t)$, called the ``local
Rademacher complexity''.
\end{enumerate}

Before stating the main result of this section, let us recall the
definition of $\hat{\delta}_n(\mathcal{F}_m;t)$, as in~\cite{Kol2006}.
We
need the following additional notation:
\begin{itemize}
\item for every $\delta>0$, the empirical $\delta$ minimal set of
$\mathcal{F}_m$ is
\[
\widehat{\mathcal{F}}_{n,m}(\delta)
:=\Bigl\{ f \in\mathcal{F}_m \mbox{ s.t. }P_n(f) - \inf_{g \in
\mathcal{F}_m} P_n(g) \leq\delta \Bigr\} = \mathcal{F}_{m,P_n}(\delta),
\]
\item the empirical $L^2(P)$ diameter of the empirical $\delta$ minimal
set of $\mathcal{F}_m$ is
\[
\widehat{D}_{n}(\mathcal{F}_m ; \delta) = \sup_{f,g \in\widehat
{\mathcal{F}}_{n,m}(\delta)} P_n\bigl( (f-g)^2 \bigr),
\]
\item the modulus of continuity of the Rademacher process $ f \mapsto
n^{-1} \sum_{i=1}^n \varepsilon_i f(\xi_i)$ over $\mathcal{F}_m$, where
$\varepsilon_1, \ldots, \varepsilon_n$ are i.i.d. Rademacher random
variables (i.e., $\varepsilon_i$ takes the values $+1$ and $-1$ with
probability $1/2$ each):
\[
\widehat{\phi}_n ( \mathcal{F}_m ; \delta) = \sup_{ f,g \in
\widehat{\mathcal{F}
}_{n,m} (\delta)} \Biggl| \frac{1}{n} \sum_{i=1}^n \varepsilon_i \bigl(f(\xi
_i) - g(\xi_i)\bigr) \Biggr|.
\]
\end{itemize}
Defining
\[
\widehat{U}_n (\mathcal{F}_m ; \delta; t) :=\widehat{K} \Biggl( \widehat
{\phi}_n ( \mathcal{F}_m ; P ; \hat{c} \delta) + \widehat
{D}_{n}(\mathcal{F}_m; \hat{c} \delta) \sqrt{ \frac{t}{n} } + \frac{t}{n} \Biggr)
\]
(where $\widehat{K},\hat{c}>0$ are numerical constants, to be
chosen later), the \textit{local Rademacher complexity}
$\hat{\delta}_n (\mathcal{F}_m ; t)$ is (roughly) the smallest positive
fixed point of $r \mapsto\widehat{U}_n (\mathcal{F}_m ; r ; t)$.
More precisely,
%e14 ###
%
\begin{equation} \label{eq.locRad}
\hat{\delta}_n (\mathcal{F}_m ; t) :=\inf\biggl\{ \delta> 0 \mbox
{ s.t. } \sup_{\sigma\geq\delta} \biggl\{ \frac{\widehat{U}_n (\mathcal
{F}_m ; \sigma; t)}{\sigma} \biggr\} \leq\frac{1}{2q} \biggr\},
\end{equation}
where $q >1$ is a numerical constant.

\begin{corollary}[(Strong margin adaptivity for local Rademacher
complexities)] \label{cor.nested.locRad}
There exist numerical constants $\overline{K}>0$ and $q>1$ such that
the following holds. Let $t>0$.
Assume that a numerical constant $L>0$ exists and an event of
probability at least $1 - L \log_q (n/t) \operatorname{Card}(\mathcal
{M}_n) \mathrm{e}^{-t}$ exists on
%e15 ###
which
\begin{equation} \label{eq.nested.min-pen.locRad} \forall m \in
\mathcal{M}_n\qquad \operatorname{pen}(m) \geq\frac{7}{2} \overline{\delta
}_n(\mathcal{F}_m ; t),
\end{equation}
where $\overline{\delta}_n(\mathcal{F}_m ; t)$ is defined by
(\ref{eq.loc-comp.ideal}) (and depends on both $\overline{K}$
and $q$).
Assume moreover that the models $( \mathcal{F}_m )_{m \in\mathcal
{M}_n}$ are nested and
\[
\widehat{m}\in\arg\min_{m \in\mathcal{M}_n} \{ P_n ( \widehat
{f}_m ) + \operatorname{pen}(m) \}.
\]
Then, an event of probability at least $1 - [ 2 + (L+1) \log_q ( \frac
{n}{t} ) ] \operatorname{Card}(\mathcal{M}_n) \mathrm{e}^{-t}$ exists on
which, for every
$\varepsilon\in(0,1)$,
%e16 ###
%
\begin{eqnarray} \label{eq.nested.locRad}
P ( \widehat{f}_{\widehat{m}} - f^{\star}) &\leq&\frac{1}{1 -
\varepsilon} \inf_{m \in\mathcal{M}_n} \Biggl\{ \biggl( 1 + \frac{2}{\overline{K} q} +
\varepsilon\bigl(1+ \sqrt{2} \bigr) \biggr) P ( f_{m} - f^{\star}) +
\operatorname{pen}(m)\nonumber\\ [-8pt]\\ [-8pt]
 &&\hphantom{\frac{1}{1 -\varepsilon} \inf
_{m \in\mathcal{M}_n} \Biggl\{}{}+ \bigl(1 + \sqrt{2}\bigr) \min\Biggl\{ \varphi_m^{\star} \Biggl( \sqrt{ \frac
{2t}{ \varepsilon^2 n}} \Biggr) , \sqrt{ \frac{2t}{n} } \Biggr\} +
\frac{t}{3n} \Biggr\}.\nonumber
\end{eqnarray}
In particular, this holds when $\operatorname{pen}(m) = \frac{7}{2}
\hat{\delta}_n(\mathcal{F}_m;t) $, provided that $\widehat
{K},\hat{c}>0$
are larger than some constants depending only on $\overline{K},q$.
\end{corollary}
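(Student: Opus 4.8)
The plan is to deduce Corollary~\ref{cor.nested.locRad} from Theorem~\ref{th.nested}, applied with the constant weights $t_m \equiv t$ (constant, hence nondecreasing with respect to the inclusion ordering), $c = 5/7$, $C_1 = \sqrt{2}$ and $C_2 = (\overline{K} q)^{-1}$. Everything reduces to checking the two penalty conditions \eqref{eq.pen.minor.nested} and \eqref{eq.nested.min-pen} on a good event. Let $\Omega$ be the intersection of the event of \eqref{eq.nested.min-pen.locRad} (of probability at least $1 - L\log_q(n/t)\card(\M_n)e^{-t}$) with the event of probability at least $1 - \card(\M_n)\log_q(n/t)e^{-t}$, furnished by Theorems~1 and~3 of \cite{Kol:2006} (recalled above), on which assumption \eqref{eq.pen.minor.nested} holds for every $m \in \M_n$ with $\pen(m)$ replaced by $\overline{\delta}_n(\F_m;t)$; in particular, on $\Omega$, $0 \le (P - P_n)(\fh_m - f_m) + t/n \le \overline{\delta}_n(\F_m;t)$ for every $m$. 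Since $\pen(m) \ge \frac{7}{2}\overline{\delta}_n(\F_m;t)$ on $\Omega$, this gives $(P - P_n)(\fh_m - f_m) + t/n \le \frac{2}{7}\pen(m) = (1 - c)\pen(m)$, so \eqref{eq.pen.minor.nested} holds; and a union bound yields $\Prob(\Omega) \ge 1 - (L + 1)\log_q(n/t)\card(\M_n)e^{-t}$.

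The main step is to verify \eqref{eq.nested.min-pen}; this is \emph{deterministic} once $\pen(m) \ge \frac{7}{2}\overline{\delta}_n(\F_m;t)$. Fix $m, m' \in \M_n$ with $\F_{m'} \subset \F_m$. By nestedness $f_{m'} \in \F_m$, so, writing $\delta \egaldef P(f_{m'} - f_m) = P(f_{m'} - \fs) - P(f_m - \fs) \in [0, P(f_{m'} - \fs)]$ (the upper bound since $P(f_m - \fs) \ge 0$), both $f_m$ and $f_{m'}$ lie in the $\delta$ minimal set $\F_{m,P}(\delta)$, whence $P((f_m - f_{m'})^2) \le D_P^2(\F_m;\delta)$. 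By definition \eqref{eq.loc-comp.ideal}, $D_P(\F_m;\sigma)\sqrt{t/n} \le U_n(\F_m;\sigma;t)/\overline{K} \le \sigma/(2q\overline{K})$ for every $\sigma \ge \overline{\delta}_n(\F_m;t)$; applying this with $\sigma = \delta \vee \overline{\delta}_n(\F_m;t)$ and using that $D_P$ is nondecreasing gives $D_P(\F_m;\delta)\sqrt{t/n} \le (2q\overline{K})^{-1}\bigl(P(f_{m'} - \fs) + \tfrac{2}{7}\pen(m)\bigr)$. Combining this with the elementary bound $\var_P(f_m - \fs) \le 2P((f_m - f_{m'})^2) + 2\var_P(f_{m'} - \fs)$ and with $\sqrt{x + y} \le \sqrt{x} + \sqrt{y}$ yields
\[
v(m) \le \sqrt{2}\,\sqrt{\frac{2t}{n}\,P\paren{(f_m - f_{m'})^2}} + \sqrt{2}\, v(m') \le 2\, D_P(\F_m;\delta)\sqrt{\frac{t}{n}} + \sqrt{2}\, v(m') \le \sqrt{2}\, v(m') + \frac{P(f_{m'} - \fs)}{\overline{K} q} + \frac{2\,\pen(m)}{7\overline{K} q} \enspace .
\]
This is exactly \eqref{eq.nested.min-pen} with $C_1 = \sqrt{2}$, $C_2 = (\overline{K} q)^{-1}$, the residual term satisfying $\frac{2}{7\overline{K} q}\pen(m) \le \frac{5}{7}\pen(m) = c\,\pen(m)$ as soon as $\overline{K} q \ge 2/5$, which holds for the (large) numerical $\overline{K}, q$ fixed in the statement; this largeness is also what makes valid the property of $\overline{\delta}_n$ invoked for \eqref{eq.pen.minor.nested}.

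Theorem~\ref{th.nested} now applies. Since $\max\{1, C_1\} = \sqrt{2}$ and $\sum_{m \in \M_n} e^{-t_m} = \card(\M_n)e^{-t}$, its conclusion holds on an event of probability at least $1 - \Prob(\Omega^c) - 2\card(\M_n)e^{-t} \ge 1 - \scroch{2 + (L + 1)\log_q(n/t)}\card(\M_n)e^{-t}$, and \eqref{eq.nested} turns into \eqref{eq.nested.locRad}: the coefficient of $P(f_m - \fs)$ is $1 + \varepsilon + C_2 + \varepsilon C_1 = 1 + (\overline{K} q)^{-1} + \varepsilon(1 + \sqrt{2}) \le 1 + 2(\overline{K} q)^{-1} + \varepsilon(1 + \sqrt{2})$, the coefficient of the $\min\set{\cdot, \cdot}$ term is $1 + \max\{1, C_1\} = 1 + \sqrt{2}$, and the $\pen(m)$ and $t/(3n)$ terms are unchanged. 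For the final sentence, Theorem~3 of \cite{Kol:2006} (recalled above) gives, as soon as $\widehat{K}, \widehat{c}$ exceed constants depending only on $\overline{K}$ and $q$, that $\Prob(\widehat{\delta}_n(\F_m;t) \ge \overline{\delta}_n(\F_m;t)) \ge 1 - 5\ln_q(n/t)e^{-t}$ for each $m$; a union bound then shows \eqref{eq.nested.min-pen.locRad} holds with $\pen(m) = \frac{7}{2}\widehat{\delta}_n(\F_m;t)$ and $L = 5$, so the first part applies to this penalty.

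I expect the main obstacle to be the deterministic derivation of \eqref{eq.nested.min-pen}. The crux is to recognize that nestedness lets one control $P((f_m - f_{m'})^2)$ by the $L^2(P)$ diameter of a $\delta$ minimal set of $\F_m$ \emph{at the scale $\delta$ of the bias} $P(f_{m'} - \fs)$, and then, through the fixed-point definition \eqref{eq.loc-comp.ideal}, to bound it by a small constant (of order $(\overline{K} q)^{-1}$) times $P(f_{m'} - \fs) + \pen(m)$. Routing through $f_{m'}$ rather than $\fs$ is essential, since $\fs$ need not belong to $\F_m$ — this is precisely where dispensing with the assumption $\fs \in \bigcup_{\mM_n} \F_m$ carries a cost.
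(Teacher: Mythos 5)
Your proof is correct and follows the same overall route as the paper's: deduce the corollary from Theorem~\ref{th.nested} by verifying hypotheses \eqref{eq.pen.minor.nested} and \eqref{eq.nested.min-pen} with $t_m\equiv t$, $c=5/7$, $C_1=\sqrt{2}$, and a $C_2$ of order $(\overline{K}q)^{-1}$. The probability bookkeeping matches the paper exactly. Where you diverge is in the deterministic verification of \eqref{eq.nested.min-pen}. The paper isolates this in two auxiliary lemmas: Lemma~\ref{le.min.loc-Rad.diam} reduces the claim to a lower bound on $\overline{\delta}_n$ (via the decomposition $D_0=\sqrt{\var_P(f_m-\fs)/2}-\sqrt{\var_P(f_{m'}-\fs)}$), and Lemma~\ref{le.min.loc-Rad.diam.tech} derives that lower bound by contraposition, constructing a $\sigma_1$ at which $U_n(\F_m;\sigma_1;t)/\sigma_1>1/(2q)$ and concluding $\overline{\delta}_n\ge\sigma_1$. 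You instead route through $\delta:=P(f_{m'}-f_m)$ directly: you observe $f_m,f_{m'}\in\F_{m,P}(\delta)$, apply the variance triangle inequality to get $v(m)\le 2D_P(\F_m;\delta)\sqrt{t/n}+\sqrt{2}\,v(m')$, and then bound $D_P(\F_m;\delta)\sqrt{t/n}$ using the fixed-point property at $\sigma=\delta\vee\overline{\delta}_n$. This yields $C_2=1/(\overline{K}q)$, marginally tighter than the paper's $C_2=2/(\overline{K}q)$, which you then relax to match the stated constant. Both approaches have the same content (the inequality $P((f_m-f_{m'})^2)\le D_P^2(\F_m;\delta)$ at the scale $\delta$ of the excess bias), with the decompositions differing at the $\sqrt{\cdot}$-vs-no-$\sqrt{\cdot}$ level.

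One small technicality you should flag: you use ``$U_n(\F_m;\sigma;t)/\sigma\le 1/(2q)$ for every $\sigma\ge\overline{\delta}_n(\F_m;t)$'', including at $\sigma=\overline{\delta}_n$. This is immediate for $\sigma>\overline{\delta}_n$ from the definition of the infimum, but at $\sigma=\overline{\delta}_n$ itself it requires the infimum to be attained or a short limiting argument (take $\sigma_k\downarrow\delta\vee\overline{\delta}_n$ with $\sigma_k>\overline{\delta}_n$, use monotonicity of $D_P$, and pass to the limit in $D_P(\F_m;\delta)\sqrt{t/n}\le\sigma_k/(2q\overline{K})$). The paper's Lemma~\ref{le.min.loc-Rad.diam.tech} is constructed precisely to avoid this by arguing in the contrapositive direction, which only uses that $\overline{\delta}_n$ is a lower bound for the feasible set. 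Your argument is easy to patch, so this is a presentational rather than substantive gap.
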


Corollary \ref{cor.nested.locRad} is proved in Section
\ref{sec.proofs.oracle}.
\begin{remark}
One can always enlarge the constants $\overline{K}$ and $q$, making the
leading constant of the oracle inequality (\ref{eq.nested.locRad})
closer to one, at the price of enlarging $\overline{\delta
}_n(\mathcal{F}_m ;
t)$ (hence $\operatorname{pen}(m)$ or $\hat{\delta}_n(\mathcal
{F}_m;t) $). We do not know
whether it is possible to make the leading constant closer to one
without changing the penalization procedure itself.
\end{remark}

As we show in Section \ref{sec.ex}, there are distributions $P$ and
collections of models $( \mathcal{F}_m )_{m \in\mathcal{M}_n}$ such that
(\ref{eq.nested.locRad}) is a strong improvement over the ``uniform
margin'' case, in terms of prediction error. It seems reasonable to
expect that this happens in a significant number of practical
situations.

In Section \ref{sec.oracle.gal}, we state a more general result (from
which Theorem \ref{th.nested} is a corollary) that suggests why it is
more difficult to prove Corollary \ref{cor.nested.locRad} when
$\varphi_m$ really depends on~$m$. This general result is also useful
to understand how the nestedness assumption might be relaxed in
Theorem~\ref{th.nested} and Corollary \ref{cor.nested.locRad}.

The reason why Corollary \ref{cor.nested.locRad} implies strong margin
adaptivity is that the local Rademacher complexities are not too large
when the local margin condition is satisfied, together with a
complexity assumption on $\mathcal{F}_m$. Indeed, there exists a
distribution-dependent $\widetilde{\delta}_n (\mathcal{F}_m ; t)$
(defined as
$\overline{\delta}_n(\mathcal{F}_m;t)$ with $U_n (\mathcal{F}_m ;
\delta; t)$
replaced by
$K_1 U_n (\mathcal{F}_m ; K_2 \delta; t)$ for some numerical constants
$K_1,K_2>0$, related to $\widehat{K}$ and $\hat{c}$) such~that
\[
\forall m \in\mathcal{M}_n \qquad \mathbb{P}\bigl( \widetilde{\delta
}_n (\mathcal{F}_m ; t)
\geq\hat{\delta}_n(\mathcal{F}_m;t) \geq\overline{\delta}_n
(\mathcal{F}_m ;
t) \bigr)
\geq1 - 5 \log_q\biggl( \frac{n}{t} \biggr) \mathrm{e}^{-t} .
\]
(See Theorem 3 of \cite{Kol2006}.) This leads to several upper bounds
on $\hat{\delta}_n(\mathcal{F}_m;t)$ under the local margin condition
(\ref{eq.margin.local.gal}), by combining Lemma 5 of \cite{Kol2006}
with the examples of its Section 2.5. For instance, in the binary
classification case, when $\mathcal{F}_m$ is the class of 0--1 loss functions
associated with a VC-class $S_m$ of dimension $V_m$, such that the
margin condition (\ref{eq.margin.local.gal}) holds with
$\varphi_m(x)=h_m x^2$, we have for every $t > 0$ and $\varepsilon\in
(0,1]$,
%e17 ###
%
\begin{equation} \label{eq.maj.locRad.VC-margin}
\overline{\delta}_n (\mathcal{F}_m ; t) \leq\varepsilon P( f_m -
f^{\star}) +
\frac{K_3}{n h_m} \biggl[ \varepsilon^{-1} t + \varepsilon^{-2} V_m \ln \biggl(
\frac{n \varepsilon^2 h_m} {K_4 V_m} \biggr) \biggr],
\end{equation}
where $K_3$ and $K_4$ depend only on $\overline{K}$. (Similar upper
bounds hold under several other complexity assumptions on the models
$\mathcal{F}_m$, see \cite{Kol2006}.) In particular, when each model
$S_m$ is a
VC-class of dimension $V_m$, $\varphi_m(x)=h_m x^2$, $\operatorname
{pen}(m) =
\frac{7}{2} \hat{\delta}_n
( \mathcal{F}_m ; t )$ and $t= \ln(\operatorname{Card}(\mathcal
{M}_n)) + 3 \ln(n)$,
(\ref{eq.nested.locRad}) implies that
\[
P ( \widehat{f}_{\widehat{m}} - f^{\star})
\leq C \inf_{m \in\mathcal{M}_n} \biggl\{ P ( f_{m} - f^{\star}) + \frac
{\ln( \operatorname{Card}(\mathcal{M}_n) ) + \ln(n) + V_m \ln( \mathrm{e}n
h_m/V_m )}{n h_m} \biggr\}
\]
with probability at least $1 - K n^{-2}$, for some numerical constants
$C,K>0$. Up to some $\ln(n)$ factor, this is a strong margin-adaptive
model selection result, provided that $\operatorname{Card}(\mathcal
{M}_n)$ is smaller than
some power of $n$. Notice that the $\ln(n)$ factor is sometimes
necessary (as shown by \cite{MasNed2003}), meaning that this upper
bound is then optimal.

%s4 ###
\section{Lower bound for some non-nested models} \label{sec.non-nested}
In this section, we investigate the assumption in Theorem
\ref{th.nested} that the models $\mathcal{F}_m$ are nested.
To this aim, let us consider the case where models are singletons
$\mathcal{F}_m
= \{ f_m \}$. Then, any estimator $\widehat{f}_m \in\mathcal{F}_m$
is deterministic and
equal to $f_m$, so that model selection amounts to selecting among a
family $\{ f_m \mbox{ s.t. }m \in\mathcal{M}_n \}$ of functions.
Theorem \ref{th.non-nested.new} below shows that no selection procedure
can be strongly margin-adaptive in general.
\begin{theorem} \label{th.non-nested.new}
Let $\gamma$ be the 0--1 loss and $\mathcal{F}^{0-1}:=
\{ \gamma( u;\cdot) \mbox{ s.t. }u\dvtx \mathcal{X}\mapsto\{ 0,1 \}
\mbox{ is measurable} \}$ be the associated loss function class.
If $\operatorname{Card}(\mathcal{X}) \geq2$, two functions $f_0, f_1
\in\mathcal{F}^{0-1}$ and
absolute constants $C_3,C_4>0$ exist such that the following holds.
For every integer $n \geq2$ and $\widehat{m}$ a selection procedure
(that is, a function $( \mathcal{X}\times\mathcal{Y})^n
\mapsto
\mathcal{M}=\{ 0,1 \} $), a distribution $P$ exists such that
\begin{equation} \label{eq.non-nested.minimax.prob}
\mathbb{P}\biggl( P ( f_{\widehat{m}} - f^{\star}) \geq\frac{C_4 \sqrt
{n}}{\ln(n)} \min
_{m \in\{ 0,1 \}} \biggl\{ P ( f_m - f^{\star}) + \overline{v}(m) + \frac
{\ln (n)}{n h_m} \biggr\} \biggr) \geq C_3
\end{equation}
and
\begin{equation}\label{eq.non-nested.minimax.esp}
\mathbb{E}[ P ( f_{\widehat{m}} - f^{\star}) ] \geq
\frac{C_3 C_4
\sqrt{n}}{\ln(n)} \min_{m \in\{ 0,1 \}} \biggl\{ P ( f_m - f^{\star}) +
\overline{v}(m) + \frac{\ln(n)}{n h_m} \biggr\},
\end{equation}
where $\forall m \in\{ 0,1 \}$
\[
\overline{v}(m) :=\sqrt{\frac{2\ln(n)}{n} \operatorname{var}_P (f_m - f^{\star}) }\quad\mbox{and}\quad h_m
:=\frac{P(f_m -f^{\star})}{\operatorname{var}_P (f_m - f^{\star})} .
\]
\end{theorem}

Theorem \ref{th.non-nested.new} is proved in Section
\ref{sec.proofs.lower}.
A straightforward corollary of Theorem \ref{th.non-nested.new} is that
in the classification setting with the 0--1 loss, \textit{strong
margin-adaptive model selection is not always possible when the models
are not nested}.
Indeed, when $\mathcal{F}_m = \{ f_m \}$ for every $m \in\mathcal
{M}_n = \{ 0,1 \}$,~(\ref{eq.non-nested.minimax.prob}) shows that for any model selection
procedure $\widehat{m}$, some distribution $P$ exists such that
results like
Theorem \ref{th.nested} or Corollary \ref{cor.nested.locRad} do not
hold if $t_m = \ln(n)$ for every $m $.
\begin{remark} \label{rk.non-nested.minimax.random}
\begin{enumerate}
\item Theorem \ref{th.non-nested.new} and its corollary for model
selection also hold for randomized rules $\widehat{m}\dvtx ( \mathcal
{X}\times\mathcal{Y})^n
\mapsto[0,1]$ (where the value of $\widehat{m}((X_i, Y_i)_{1 \leq i
\leq n})$ is
the probability assigned to the choice of $f_1$). Hence, aggregating
models instead of selecting one does not modify the conclusion of
Theorem \ref{th.non-nested.new}.
\item The most reasonable selection procedure among two functions $f_0$
and $f_1$ (or two models $\{ f_0 \}$ and $\{ f_1 \}$) clearly is
empirical minimization. The proof of Theorem \ref{th.non-nested.new}
yields explicitly some distribution $P$, called $P_1 $, such that
(\ref{eq.non-nested.minimax.prob}) and
(\ref{eq.non-nested.minimax.esp}) hold for empirical minimization.
Note that when models are singletons, most penalization procedures
coincide with empirical minimization, for instance, when $\operatorname
{pen}(m)$ is
proportional to the local Rademacher complexity $\hat{\delta}_n
(\mathcal{F}_m ; t)$, or to the ideal penalty $\operatorname
{pen}_{\mathrm{id}}(m) = (P - P_n)(\widehat{f}_m -
f_m)$, its expectation or some quantile of $\operatorname
{pen}_{\mathrm{id}}(m)$.
\item Theorem \ref{th.non-nested.new} focuses on margin adaptivity with
$\varphi_m(x) = h_m x^2$, whereas the margin condition is also
satisfied with other functions $\varphi_m$. This is both for simplicity
reasons and because this choice emphasizes that one could hope for
learning rates of order $1 / (n h_m)$ if strong margin adaptivity were
possible. The meaning of Theorem \ref{th.non-nested.new} is then mainly
that one cannot guarantee to learn at a rate better than $1/\sqrt{n}$,
whereas for some model, the excess loss and $1/(n h_m)$ both are of
order $1/n $.
\item The counterexample given in the proof of Theorem
\ref{th.non-nested.new} is highly non-asymptotic, since the
distribution $P$ strongly depends on $n$. If $P$ and $f_0, f_1$ were
fixed, it is well known that empirical minimization leads to asymptotic
optimality, because $( f_m )_{m\in\{ 0,1 \}}$ is finite and fixed
when $n$ grows.
This illustrates a significant difference between the asymptotic and
non-asymptotic frameworks. Another example of such a difference occurs
when the number of candidate functions (or models) is infinite, or
grows to infinity with the sample size, see~(iv) in Proposition
\ref{le.local.vs.global.margin} in Section \ref{sec.ex}.
\end{enumerate}
\end{remark}

With Theorem \ref{th.nested}, we have proven a strong margin adaptivity
result for nested models, which holds true when the penalty is built
upon local Rademacher complexities. Therefore, adaptive model selection
is attainable for nested models, whatever the distribution of the data.
On the other hand, Theorem \ref{th.non-nested.new} gives a simple
example where no model selection procedure can satisfy an oracle
inequality (\ref{eq.oracle.gal}) with a leading constant smaller than
$C_4 \sqrt{n} / (\ln(n)) $.

Looking carefully at the selection problems considered in the proof of
Theorem \ref{th.non-nested.new}, it appears that the main reason why
they are particularly tough is that we are quite ``lucky'' with one of
the models: it has simultaneously a very small bias, a very small size
and a large margin parameter, while other models with very similar
appearance are much worse.
When looking for a more general strong margin adaptivity result, we
then must keep in mind that this is a hopeless task in such situations.

Let us finally mention a related result in a close but slightly
different framework.
In the classification framework, under a \textit{global} margin
condition with $\varphi(x) \propto x^{2 \kappa}$ with $\kappa\geq1$,
Theorem 3 in~\cite{Lec2007b} shows that for any $M_n\geq2 $, a family
$( u_m )_{m \in\mathcal{M}_n}$ of $M_n$ classifiers exists for which,
for any
selection procedure $\widehat{m}$, some distribution $P$ exists such that
\[
\mathbb{E}[ P(f_{\widehat{m}} - f^{\star}) ] \geq\inf_{m \in
\mathcal{M}_n} \{ P(f_m - f^{\star} ) \} + C \biggl( \frac{\ln(M_n)}{n}
\biggr)^{\kappa/(2 \kappa- 1)} ,
\]
where $f_m = \gamma(u_m;\cdot)$ for some loss function $\gamma$.
When $\widehat{m}$ is (penalized) empirical minimization, the
remainder term is
shown to be as large as $C \sqrt{\ln(M_n)/n}$ when the margin condition
holds with $\kappa>1$.

This result and Theorem \ref{th.non-nested.new} focus on different
problems. In \cite{Lec2007b}, the margin condition is only assumed to
hold \textit{globally}, and the focus is on the dependence of the
remainder term on the cardinality $M_n$ of $\mathcal{M}_n $.
Therefore, the
counterexample given in \cite{Lec2007b} implies nothing about local
margin conditions for $(f_m)_{m \in\mathcal{M}_n} $.
Note that using these arguments, we could probably generalize Theorem
\ref{th.non-nested.new} to a family of $M_n \geq2$ functions and obtain
a lower bound depending on $M_n$ as in~\cite{Lec2007b}.

%s5 ###
\section{General collections of models} \label{sec.oracle.gal}
As proven in Section \ref{sec.non-nested}, we cannot hope to obtain
margin adaptivity without any assumption on either $P$ or the models.
The purpose of this section is to explain what can still be proven in
the general case, and why this is weaker than our Theorem
\ref{th.nested}.

%s5.1 ###
\subsection{A general oracle inequality}
We start with a general result for penalties satisfying the lower bound
(\ref{eq.pen.minor.nested}).

\begin{theorem} \label{th.gal.oracle}
Let $( \mathcal{F}_m )_{m \in\mathcal{M}_n}$ be any countable family
of models, and
$( t_m )_{m \in\mathcal{M}_n}$ be any sequence of positive numbers.
Let $\widehat{m}$ be defined by (\ref{eq.pen}) and assume that some
$c \in
(0,1)$ exists such that
%e18 ###
%
\begin{equation} \label{eq.pen.minor.gal}
\forall m \in\mathcal{M}_n\qquad (1-c) \operatorname{pen}(m) \geq(P -
P_n) ( \widehat{f}_m - f_m ) +
\frac{t_m}{n} \geq0
\end{equation}
on an event of probability at least $1 - \eta$.

Then, there exists an event of probability at least $1 - \eta- 2
\sum_{m \in\mathcal{M}_n} \mathrm{e}^{-t_m}$ on which the following holds:
for every
$\varepsilon\in(0,1)$,
\begin{equation} \label{eq.gal.oracle}
P ( \widehat{f}_{\widehat{m}} - f^{\star}) \leq\frac{1}{1 -
\varepsilon} \inf
_{m \in\mathcal{M}_n} \biggl\{ P ( f_{m} - f^{\star}) + \operatorname
{pen}(m) + v(m) + \frac {t_m}{3n} \biggr\} + V_n,
\end{equation}
where
\[
V_n :=\frac{1}{1 - \varepsilon} \sup_{m \in\mathcal{M}_n}
\{ v(m) - \varepsilon P( f_{m} - f^{\star}) - c \operatorname{pen}(m)\}
\]
and
\[
v(m) :=\sqrt{ \frac{2t_m}{n} \operatorname{var}_P (f_m -
f^{\star}) }.
\]
\end{theorem}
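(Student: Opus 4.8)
The plan is to work on the event where \eqref{eq.pen.minor.gal} holds together with two concentration events coming from Bernstein's inequality, and then exploit the definition of $\mh$ as a minimizer of the penalized empirical criterion. First I would record the two deviation inequalities I want to hold simultaneously for all $m \in \M_n$. On one hand, by Bernstein's inequality applied to the single function $f_m - \fs$ (which takes values in $[-1,1]$, with variance $\var_P(f_m - \fs)$), for each $m$ with probability at least $1 - 2e^{-t_m}$,
\[
(P - P_n)(f_m - \fs) \leq \sqrt{\frac{2 t_m}{n} \var_P(f_m - \fs)} + \frac{t_m}{3n} = v(m) + \frac{t_m}{3n},
\]
and symmetrically $(P_n - P)(f_m - \fs) \leq v(m) + t_m/(3n)$. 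A union bound over $\M_n$ gives an event $\Omega_1$ of probability at least $1 - 2\sum_{\mM_n} e^{-t_m}$ on which both hold for every $m$. Intersecting with the event of probability $\geq 1-\eta$ where \eqref{eq.pen.minor.gal} holds, I get the claimed event of probability at least $1 - \eta - 2\sum_{\mM_n} e^{-t_m}$.

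Next, on that event, I would run the standard penalized-model-selection comparison argument. Fix an arbitrary $m \in \M_n$. By definition of $\mh$,
\[
P_n(\fh_{\mh}) + \pen(\mh) \leq P_n(\fh_m) + \pen(m) \leq P_n(f_m) + \pen(m),
\]
the last step because $\fh_m$ minimizes $P_n$ over $\F_m \ni f_m$. Now I decompose $P(\fh_{\mh} - \fs)$ by adding and subtracting $P_n$ terms: write
\[
P(\fh_{\mh} - \fs) = (P - P_n)(\fh_{\mh} - f_{\mh}) + P_n(\fh_{\mh}) - P_n(f_m) + (P_n - P)(f_m - \fs) + P(f_m - \fs).
\]
For the first term I use the lower bound on the penalty \eqref{eq.pen.minor.gal}: $(P - P_n)(\fh_{\mh} - f_{\mh}) \leq (1-c)\pen(\mh) - t_{\mh}/n \leq (1-c)\pen(\mh)$. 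For the middle difference $P_n(\fh_{\mh}) - P_n(f_m) \leq \pen(m) - \pen(\mh)$ from the minimality inequality above. For the term $(P_n - P)(f_m - \fs)$ I use the Bernstein bound from $\Omega_1$: it is at most $v(m) + t_m/(3n)$. Collecting everything,
\[
P(\fh_{\mh} - \fs) \leq -c\,\pen(\mh) + P(f_m - \fs) + \pen(m) + v(m) + \frac{t_m}{3n}.
\]
The only thing left to control is the stray negative-looking term $-c\,\pen(\mh)$, except that it comes paired with nothing that bounds it from the wrong side — I must absorb the $\var_P(f_{\mh} - \fs)$ that would naturally appear if I had instead kept $(P-P_n)(\fh_{\mh} - f_{\mh})$ un-bounded. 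The cleaner bookkeeping: I must reintroduce a term $+v(\mh) - \varepsilon P(f_{\mh} - \fs)$ on the right (it is automatically $\leq \sup_{\mM_n}\{v(m') - \varepsilon P(f_{m'} - \fs) - c\,\pen(m')\} + c\,\pen(\mh)$, i.e. $\leq V_n + c\,\pen(\mh)$, which exactly cancels the $-c\,\pen(\mh)$), and then move the remaining $\varepsilon P(f_{\mh} - \fs)$ to the left using $P(f_{\mh} - \fs) \geq 0$ is not enough — instead I note $P(\fh_{\mh} - \fs) \geq P(f_{\mh} - \fs) \geq$ ... more carefully, $\fh_{\mh} \in \F_{\mh}$ so $P(\fh_{\mh}) \geq P(f_{\mh})$, whence $\varepsilon P(f_{\mh} - \fs) \leq \varepsilon P(\fh_{\mh} - \fs)$. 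Substituting and rearranging yields
\[
(1-\varepsilon) P(\fh_{\mh} - \fs) \leq P(f_m - \fs) + \pen(m) + v(m) + \frac{t_m}{3n} + (1-\varepsilon) V_n,
\]
and dividing by $1-\varepsilon$ and taking the infimum over $m \in \M_n$ gives exactly \eqref{eq.gal.oracle}.

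The main obstacle, and the step to get exactly right, is the bookkeeping around the two penalty terms $\pen(m)$ and $\pen(\mh)$ together with the variance term $\var_P(f_{\mh} - \fs)$ hidden inside the fluctuation of $\fh_{\mh}$ around $f_{\mh}$: one has to insert and subtract $v(\mh)$, bound the surplus by the supremum defining $V_n$ (which is where the $-c\,\pen(m')$ inside $V_n$ is designed to neutralize $+c\,\pen(\mh)$), and use $P(\fh_{\mh} - \fs) \geq P(f_{\mh} - \fs)$ to turn the $\varepsilon P(f_{\mh}-\fs)$ slack into a $(1-\varepsilon)$ factor on the left-hand side. Everything else — the two Bernstein inequalities and the minimality of $\mh$ — is routine. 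Note also that only the lower bound \eqref{eq.pen.minor.gal} on the penalty is used, not the nestedness or the second inequality \eqref{eq.nested.min-pen}; Theorem~\ref{th.nested} is then recovered by invoking \eqref{eq.nested.min-pen} to bound $V_n$ from above under the nested assumption.
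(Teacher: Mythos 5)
Your overall strategy --- Bernstein applied to each $f_m - \fs$, the minimality inequality for $\mh$, the lower bound \eqref{eq.pen.minor.gal} on the penalty, and finally the trick of adding and subtracting $\varepsilon P(f_{\mh}-\fs)$ while using $P(\fh_{\mh}) \geq P(f_{\mh})$ --- is exactly the paper's. But the central algebraic decomposition you write down is wrong, and the ``bookkeeping'' paragraph that tries to patch the consequences is confused about where the missing piece actually comes from. Your claimed identity
\[
P(\fh_{\mh} - \fs) = (P - P_n)(\fh_{\mh} - f_{\mh}) + P_n(\fh_{\mh}) - P_n(f_m) + (P_n - P)(f_m - \fs) + P(f_m - \fs)
\]
is false: expanding the right-hand side and cancelling gives $P(\fh_{\mh} - \fs) - (P - P_n)(f_{\mh} - \fs)$, so you have dropped the term $(P - P_n)(f_{\mh} - \fs)$. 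Consequently the line ``Collecting everything, $P(\fh_{\mh} - \fs) \leq -c\,\pen(\mh) + P(f_m - \fs) + \pen(m) + v(m) + t_m/(3n)$'' does not follow; the correct inequality after using \eqref{eq.pen.minor.gal}, minimality, and Bernstein for $(P_n-P)(f_m-\fs)$ still carries the stray term $(P - P_n)(f_{\mh} - \fs)$ on the right.

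That stray term is precisely the source of the $v(\mh)$ you later want to ``reintroduce'': you must apply Bernstein a second time, to $(P - P_n)(f_{\mh} - \fs)$, yielding $(P - P_n)(f_{\mh} - \fs) \leq v(\mh) + t_{\mh}/(3n)$, and the $t_{\mh}/(3n)$ is then dominated by the $-t_{\mh}/n$ coming from \eqref{eq.pen.minor.gal}. You misattribute the appearance of $\var_P(f_{\mh} - \fs)$ to the fluctuation $(P - P_n)(\fh_{\mh} - f_{\mh})$; that fluctuation is entirely controlled by the penalty lower bound and never contributes any variance term. Once the missing term is restored and bounded by Bernstein, the rest of your final algebra (adding $\pm\varepsilon P(f_{\mh}-\fs)$, bounding by $\sup_{m'}\{v(m') - \varepsilon P(f_{m'} - \fs) - c\pen(m')\} = (1-\varepsilon)V_n$, and absorbing $\varepsilon P(f_{\mh}-\fs) \leq \varepsilon P(\fh_{\mh}-\fs)$ into the left-hand side) does go through and gives the theorem, matching the paper's proof.
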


Theorem \ref{th.gal.oracle} is proved in Section
\ref{sec.proofs.oracle}.
Let us make a few comments.

First, without $V_n$, (\ref{eq.gal.oracle}) is the kind of oracle
inequality we are looking for, since the leading constant is close to 1
(provided $\varepsilon$ is small enough).
For the sake of simplicity, assume that a~margin condition
(\ref{eq.margin.local.gal}) holds for every model $m \in\mathcal
{M}_n$, with
$\varphi_m(x) = h_m x^2$. Then,
\[
v(m) \leq\sqrt{ \frac{2t_m P
( f_m - f^{\star})}{h_m n}} \leq\varepsilon P ( f_m - f^{\star}) +
\frac{t_m}{2 \varepsilon h_m n}
\]
for any $\varepsilon\in(0,1)$.
Hence, the first term of the right-hand side of (\ref{eq.gal.oracle})
is smaller than
\[
\frac{1 + \varepsilon}{1 - \varepsilon} \inf_{m \in\mathcal{M}_n}
\biggl\{ P( f_m - f^{\star}) + \operatorname{pen}(m) + \frac{t_m}{ 2
\varepsilon h_m n} + \frac{t_m}{3 n} \biggr\} ,
\]
which is the right-hand side of a margin-adaptive oracle inequality
like (\ref{eq.oracle.margin}) (at least when the penalty is itself of
the right order).
A similar result holds for a more general $\varphi_m$; see the proof of
Theorem \ref{th.nested}.

Once we have a penalty satisfying (\ref{eq.pen.minor.gal}) (for
instance, a local Rademacher penalty), the main difficulty for proving
a strong margin adaptivity result then lies in $V_n$.
It arises from the difference between the ideal penalty and the
right-hand side of the lower bound (\ref{eq.pen.minor.gal}), that is
$(P-P_n)(f_m)$. This random quantity is centered, and (up to a quantity
independent of $m$) has deviations of order $v(m)$, Bernstein's
inequality being unimprovable. Then, if $v(m)$ happens to be much
larger than $P(f_m - f^{\star}) + \operatorname{pen}(m)$, $m$ is
selected with a positive
probability, whatever the value of $P(\widehat{f}_m - f^{\star}) $.
In that case,
the expectation of $\widehat{f}_{\widehat{m}}$ is worse than the
oracle by at least
$v(m)$ (for any of these ``bad'' models). Hence, $V_n$ certainly is
unavoidable in (\ref{eq.gal.oracle}).

As shown by Theorem \ref{th.non-nested.new}, $V_n$ can be much larger
than the expectation of a strong margin-adaptive estimator.
Nevertheless, $V_n$ is not always the main term on the right-hand side
of (\ref{eq.gal.oracle}). Let us now describe a set of favorable
situations in which it is possible to prove that $V_n$ is small enough:

\begin{enumerate}
\item Models are nested, $t_m$ is non-decreasing (with respect to the
inclusion ordering on $\mathcal{F}_m$), and $\operatorname{pen}$
satisfies the additional
condition (\ref{eq.nested.min-pen}); see Section \ref{sec.nested}.
\item Models are nested, $t_m$ is non-decreasing and $v(m)$ is
decreasing (or at least not increasing too much) when $\mathcal{F}_m$
increases.
Indeed, let us fix $m, m^{\star}\in\mathcal{M}_n$ (think of
$m^{\star}$ as a minimizer of
the infimum on the right-hand side of (\ref{eq.gal.oracle})). When
models are nested, either $\mathcal{F}_{m^{\star}} \subset\mathcal
{F}_{m}$ so that $v(m)
\leq
\sup_{\mathcal{F}_{m^{\star}} \subset\mathcal{F}_{m^{\prime}}} \{
v(m^{\prime}) \}
$, or
$\mathcal{F}_m \subset\mathcal{F}_{m^{\star}}$ so that $\varphi
_{m^{\star}} \leq\varphi_{m}$ hence
$\varphi_{m}^{\star} \leq\varphi_{m^{\star}}^{\star} $. In the
second case,
\[
v(m) - \varepsilon P(f_m - f^{\star}) \leq
\varphi_m^{\star}\Biggl( \sqrt{\frac{2 t_m}{\varepsilon^2 n}} \Biggr)
\leq
\varphi_{m^{\star}}^{\star}\Biggl( \sqrt{\frac{2 t_m}{\varepsilon^2 n}} \Biggr)
\leq\varphi_{m^{\star}}^{\star}\Biggl( \sqrt{\frac{2 t_{m^{\star
}}}{\varepsilon
^2 n}} \Biggr)
\]
since $t_m \leq t_{m^{\star}}$ and $\varphi^{\star}_{m^{\star}}$ is
non-decreasing.
As a consequence, for any $m^{\star}\in\mathcal{M}_n $,
\[
V_n \leq\frac{1}{1 - \varepsilon} \max\Biggl\{ \sup_{\mathcal
{F}_{m^{\star}} \subset\mathcal{F}_{m^{\prime}}} \{ v(m^{\prime})
\} ; \varphi_{m^{\star}}^{\star}\Biggl( \sqrt{\frac{2 t_{m^{\star
}}}{\varepsilon^2 n}} \Biggr) \Biggr\},
\]
which is not too large provided that $v(m)$ never increases too much.
Notice that we can understand assumption (\ref{eq.nested.min-pen}) as
ensuring that the penalty compensates a possible increase of $v(m)$.
\item The oracle model prediction error does not decrease to zero
faster than $n^{-1/2}$ and $t_m \leq t$. Indeed, the straightforward
upper bound $v(m) \leq\sqrt{2 t_m / n}$ shows that $V_n \leq(1 -
\varepsilon)^{-1} \sqrt{2 t / n}$.
\item The margin condition does not depend on $m$ and $t_m \leq t$.
Indeed, when $\varphi_m \equiv\varphi$ (or $\inf_m \varphi_m \geq
\varphi$), we have
\[
V_n \leq\frac{1}{1 - \varepsilon} \sup_{m \in\mathcal{M}_n} \Biggl\{
\varphi _m^{\star}\Biggl( \sqrt{\frac{2 t_m}{\varepsilon^2 n}} \Biggr) \Biggr\} \leq
\frac
{1}{1 - \varepsilon} \varphi^{\star}\Biggl( \sqrt{\frac{2 t}{\varepsilon
^2 n}} \Biggr).
\]
\item The penalty satisfies $c \operatorname{pen}(m) \geq v(m)$ for
every $m \in\mathcal{M}_n$,
which can be ensured for instance by adding $c^{-1} v(m)$ (or an
estimate of it) to a penalty satisfying (\ref{eq.pen.minor.gal}). An example of this
method is the one proposed by Koltchinskii \cite{Kol2006} (Section
5.2), and in that case (\ref{eq.gal.oracle}) coincides with his Theorem
6.
\end{enumerate}

Points 3 and 4 above show that the challenging situations are the ones
where the margin condition indeed depends on the model, and fast rates
of estimation are attainable. We prove in Section~\ref{sec.ex} that
such situations can occur, enlightening how our Theorem \ref{th.nested}
is an improvement on existing results and their straightforward
consequences.

On the other hand, point 5 may seem contradictory with the negative
results of Section \ref{sec.non-nested}. The explanation is that using
$v(m)$ in the penalty means that $\widehat{m}$ is not only a function
of the
data, but also of the unknown distribution $P$. Then it cannot be
considered adaptive. A more surprising consequence of this remark
combined with Theorem \ref{th.non-nested.new} is that $v(m)$ cannot be
estimated accurately enough uniformly over the set of all distributions
$P$. Consider the proposal, in Section~5.1 of \cite{Kol2006}, to add
\[
C \sqrt{ \frac{t_m P_n(\widehat{f}_m)}{n} }
\]
to the penalty, which is sufficient to give a result like
(\ref{eq.nested}). The point is that such a penalty is generally much
too large (at least for small models), which often results in an upper
bound of order $n^{-1/2}$. In the examples we have in mind (as well as
in the counterexamples of Section~\ref{sec.non-nested}), the excess
risk of the oracle is much smaller, typically of order $n^{-\beta}$ for
some $\beta\in(1/2;1]$.

%s5.2 ###
\subsection{The local margin conditions can be significantly
tighter than the global one} \label{sec.ex}
In this section, we show that there exist challenging situations in
which the margin condition holds for functions $\varphi_m$ strongly
depending on $m$.

\begin{proposition} \label{le.local.vs.global.margin}
Let $\kappa\in(1;+\infty)$ and assume that $\mathcal{X}$ is
infinite. Let
$\gamma$ be the 0--1 loss and $\mathcal{F}^{0-1}:=
\{ \gamma( u;\cdot) \mbox{ \textup{s.t.} }u\dvtx \mathcal{X}\mapsto\{ 0,1 \}
\mbox{ is measurable} \}$ be the associated loss function class.
Then there exist a probability distribution $P$ on $\mathcal{X}\times
\{ 0,1 \} $, a sequence $(f_j)_{j \in\mathbb{N}}$ of elements of
$\mathcal{F}^{0-1}$
and positive constants $(C_i)_{5 \leq i \leq7}$ (depending on
$\kappa$ only) such that:
\begin{enumerate}[(iii)]
\item[(i)] $\forall k \in\mathbb{N}$, $P(f_{2k+1} - f^{\star}) =
P(f_{2k} - f^{\star}
) =
b(k)$ and $2^{-k \kappa-2} \leq b(k) \leq2^{-k \kappa-1} $.
\item[(ii)] The global margin condition (\ref{eq.margin.global.gal}) is
satisfied over $\mathcal{F}=\mathcal{F}^{0-1}$ with $\varphi(x) =
C_5 x^{2\kappa} $, and
it is tight: $\forall k \in\mathbb{N},
\varphi( \sqrt{\operatorname{var}( f_{2k+1} - f^{\star})} ) \geq C_6
P(f_{2k+1} -
f^{\star}) $.
\item[(iii)] A tighter local margin condition
(\ref{eq.margin.local.gal}) holds over $\{ f_{2k} \mbox{ s.t. }k
\in \mathbb{N} \} \dvtx \forall k \in\mathbb{N}$,
$P(f_{2k}-f^{\star}) \leq\operatorname{var}_P
(f_{2k} - f^{\star}) $.
\item[(iv)] For every $m \in\mathbb{N}$, define $\mathcal{F}_m = \{
f_m \}$ and
consider the model selection problem among $( \mathcal{F}_m )_{0 \leq m
\leq
M_n}$ with $M_n \geq2 \ln_2(n) $. Then, the right-hand side of a
strong margin-adaptive oracle inequality of the form
(\ref{eq.oracle.gal}) is at most proportional to
\[
\inf_{0 \leq2 k \leq M_n } \biggl\{ P(f_{2k} - f^{\star}) + \frac{\ln
(n)}{n} \biggr\} \leq\frac{2 \ln(n)}{n} ,
\]
whereas the right-hand side of a global margin-adaptive oracle
inequality is larger than $C_7 n^{-\kappa/(2\kappa-1)} \gg(\ln(n))/n
$.
\end{enumerate}
\end{proposition}

Proposition \ref{le.local.vs.global.margin} is proved in Section
\ref{sec.pr.le.local.vs.global.margin}.
It gives an example of a model selection problem where \textit{strong
margin adaptivity implies a faster rate of convergence than adaptivity
to the global margin condition}.
Note that the same argument works with many other model selection
problems, such as selecting among $( \{ f_{2k+1} \mbox{ s.t. }0 \leq
k \leq m \} )_{m \in\{ 1, \ldots, (\ln(n))^2 \}} $.

%s6 ###
\section{Discussion} \label{sec.discussion}

%s6.1 ###
\subsection{Other penalization procedures}
We have focused in Section \ref{sec.setting.loc-complex} on penalties
defined in terms of local Rademacher complexities in order to prove
that strong margin adaptivity is attainable for some data-driven
penalties. An~interesting question is whether such a result can be
extended to penalties that can be computed faster.

For instance, it is natural to think of estimating $\operatorname
{pen}_{\mathrm{id}}(m)$ itself
by resampling, instead of the local complexity
$\overline{\delta}_n(\mathcal{F}_m ; t)$. Such penalties, with
several kinds of
resampling schemes, have been proposed in \cite{Arl2008a} and
\cite{Arl2009RP} and called ``resampling penalties'' (RP), generalizing
the bootstrap penalty suggested by Efron \cite{Efr1983}.
Resampling penalties can be computed faster than local Rademacher
complexities, because they are not defined as fixed points of the
resampling estimate of a function. In particular, the $V$-fold
penalties defined in \cite{Arl2008a} have the same computational cost
as $V$-fold cross-validation.

In addition, RP are easy to calibrate, since they depend on a single
tuning parameter -- the multiplicative factor in front of it -- which
can, for instance, be estimated from the data by using the ``slope
heuristics'' (see \cite{ArlMas2009pente}).
On the contrary, local Rademacher complexities depend on two more
constants, whose theoretical values are certainly too large for
practical application.

Extending Corollary \ref{cor.nested.locRad} to RP would require to
prove that RP satisfy both assumptions (\ref{eq.pen.minor.nested}) and
(\ref{eq.nested.min-pen}). On the one hand, (\ref{eq.pen.minor.nested})
means essentially that the penalty is larger than the expectation of
the ideal penalty with large probability. Hence, one can conjecture
that (\ref{eq.pen.minor.nested}) holds for RP; a~partial proof of
(\ref{eq.pen.minor.nested}) for RP in our general setting can be found
in Chapter 7 of \cite{Arl2007phd}, together with an agenda for a
complete proof, which seems to be a difficult theoretical problem.
On the other hand, (\ref{eq.nested.min-pen}) seems less likely to hold
for RP, and we may have to modify RP so that (\ref{eq.nested.min-pen})
can be satisfied in general.

Proving such results would be quite interesting, since it would provide
a strong margin-adaptive penalization procedure with a reasonably small
computational cost.

%s6.2 ###
\subsection{Should we make collections of models nested?}

A natural question coming from our results is whether one should make
any collection of models nested before performing model selection in
order to improve performance. Let us consider the counterexample of
Theorem \ref{th.non-nested.new} and look at what would happen if we
make the models nested.

Assume that $P = P_1$ is the distribution defined in the proof of
Theorem \ref{th.non-nested.new}. On the one hand, comparing $\{ f_0 \}$
and $\{ f_0 , f_1 \} $, the model selection problem would be easy
because the margin parameter $h_m$ is the same in both models, making
the remainder term of order $n^{-1/2}$ (the remainder term $(n
h_m)^{-1}$ can be replaced by $n^{-1/2}$ when $h_m \leq n^{-1/2}$
because of the upper bound $\operatorname{var}_P (f_m - f^{\star})
\leq1/4$). And margin
adaptivity is not challenging when the margin condition is merely not
satisfied.
On the other hand, when $P = P_1$, comparing $\{ f_1 \}$ and $\{ f_0 ,
f_1 \}$ is more challenging because $f_1$ is really better than
$f_0 $. Here, contrary to the non-nested case, the large increase of
the term $\operatorname{var}_P (f_m - f^{\star})$ induces a similar
increase in the
$L^2(P_1)$ diameter of the class. Hence, local Rademacher complexities
can detect it, as shown by Theorem \ref{th.nested}.

To conclude, improving significantly the prediction performance of the
final estimator by making the models nested requires some prior
knowledge, such as a natural ordering between the (non-nested) models.
Otherwise, Theorem \ref{th.non-nested.new} shows that choosing how to
make the models nested, either from data or randomly, is not successful
with probability at least $C_3>0 $, whatever the sample size.

%s7 ###
\section{Proofs} \label{sec.proofs}

%s7.1 ###
\subsection{Oracle inequalities} \label{sec.proofs.oracle}

We give the proofs in a logical order, that is, first Theorem
\ref{th.gal.oracle}, then Theorem \ref{th.nested} (which is a corollary
of it), and finally Corollary \ref{cor.nested.locRad}.

\begin{pf*}{Proof of Theorem \ref{th.gal.oracle}}
First, by definition of $\widehat{m}$, for every $m \in\mathcal
{M}_n$ we have
\begin{eqnarray*}
&&P_n ( \widehat{f}_{\widehat{m}} ) + \operatorname{pen}(\widehat{m})\\
&&\quad\leq P_n ( \widehat{f}_m ) + \operatorname{pen}
(m),
\end{eqnarray*}
which can be rewritten as
\begin{eqnarray*}
&& P ( \widehat{f}_{\widehat{m}} - f^{\star}) + (P_n - P)( \widehat
{f}_{\widehat{m}} - f_{\widehat{m}} ) + (P_n
- P)( f_{\widehat{m}} - f^{\star}) + \operatorname{pen}(\widehat
{m}) \\
&&\quad\leq
P ( f_{m} - f^{\star}) + P_n( \widehat{f}_{m} - f_{m} ) + (P_n -
P)( f_{m} - f^{\star}) + \operatorname{pen}(m) \\
&&\quad\leq P ( f_{m} - f^{\star}) + (P_n - P)( f_{m} - f^{\star}) +
\operatorname{pen}(m).
\end{eqnarray*}
In the event that (\ref{eq.pen.minor.gal}) holds, we then have
%e19 ###
%
\begin{eqnarray} \label{eq.gal.oracle.tmp1}
&& P( \widehat{f}_{\widehat{m}} - f^{\star}) + (P_n -
P)( f_{\widehat{m}} - f^{\star}) +
c \operatorname{pen}(\widehat{m}) + \frac{t_{\widehat{m}}}{n} \nonumber\\ [-8pt]\\
[-8pt]
&&\quad\leq\inf_{m \in\mathcal{M}_n}
\{ P ( f_{m} - f^{\star}) + (P_n - P)( f_{m} - f^{\star}) +
\operatorname{pen} (m) \}.\nonumber
\end{eqnarray}

By Bernstein's inequality (see, e.g., Proposition 2.9 in
\cite{Mas2003StFlour}), for every $m \in\mathcal{M}_n$, there is an
event of
probability $1 - 2 \mathrm{e}^{-t_m}$ on which
\[
| (P_n - P)( f_{m} - f^{\star}) | \leq v(m) + \frac{t_m}{3n} .
\]
On the intersection of these events with the one in which
(\ref{eq.pen.minor.gal}) holds, we derive from
(\ref{eq.gal.oracle.tmp1}) that
\[
P ( \widehat{f}_{\widehat{m}} - f^{\star}) - v(\widehat{m}) + c
\operatorname{pen}(\widehat{m}) \leq\inf_{m \in\mathcal{M}_n}
\biggl\{ P ( f_{m} - f^{\star}) + \operatorname{pen}(m) + v(m) + \frac
{t_m}{3n} \biggr\}.
\]
For any $\varepsilon> 0$, the left-hand side is larger than
\begin{eqnarray*}
&& (1 - \varepsilon) P ( \widehat{f}_{\widehat{m}} -
f^{\star}) +
\varepsilon P( f_{\widehat{m}} - f^{\star}) + c \operatorname
{pen}(\widehat{m}) - v(\widehat{m}) \\
&&\quad\geq
(1 - \varepsilon) P ( \widehat{f}_{\widehat{m}} - f^{\star}) - \sup
_{m \in\mathcal{M}_n} \{ v(m) - \varepsilon P( f_{m} - f^{\star}) -
c \operatorname{pen}(m) \}.
\end{eqnarray*}
The result follows.
\end{pf*}

\begin{pf*}{Proof of Theorem \ref{th.nested}}
We consider the event in which (\ref{eq.gal.oracle}) holds. By Theorem
\ref{th.gal.oracle}, we know that it has probability at least $1 -
\eta- 2 \sum_{m \in\mathcal{M}_n} \mathrm{e}^{-t_m}$.
We first bound the first term on the right-hand side of
(\ref{eq.gal.oracle}). From (\ref{eq.margin.local.gal}), we have
\[
\forall m \in\mathcal{M}_n \qquad v(m) \leq\sqrt{ \frac
{2t_m}{n}} \varphi_m^{-1}
\bigl( P ( f_m - f^{\star}) \bigr).
\]
Then, using that $xy \leq\varphi_m(x) + \varphi_m^{\star}(y)$ for
every $x,y \geq0$,
\[
\forall m \in\mathcal{M}_n \qquad v(m) \leq\varphi_m^{\star} \Biggl(
\sqrt{ \frac
{2t_m}{ \varepsilon^2 n}} \Biggr) + \varphi_m\bigl( \varepsilon\varphi_m^{-1}
\bigl( P ( f_m - f^{\star}) \bigr) \bigr).
\]
Since $\varphi_m$ is convex with $\varphi_m(0)=0$, we have
$\varphi_m(\lambda x) \leq\lambda\varphi_m(x)$ for every $\lambda
\in
(0,1)$ and $x \geq0$. Then, using also that $\operatorname{var}_P
(f_m - f^{\star}) \leq
1$,
%
%e20 ###
%
\begin{equation} \label{eq.maj.v-margin-loc}
\forall m \in\mathcal{M}_n \qquad v(m) \leq\min\Biggl\{\sqrt{
\frac{2 t_m}{n} } ,\varphi_m^{\star} \Biggl( \sqrt{ \frac{2t_m}{
\varepsilon^2 n}} \Biggr) + \varepsilon P ( f_m - f^{\star}) \Biggr\},
\end{equation}
and the right-hand side of (\ref{eq.gal.oracle}) is smaller than
%e21 ###
%
\begin{equation} \label{eq.nested.tmp1}
\hspace*{-10pt}\frac{1}{1 - \varepsilon} \inf_{m \in\mathcal{M}_n} \Biggl\{ (1 +
\varepsilon) P ( f_m - f^{\star}) + \operatorname{pen}(m) + \min
\Biggl\{\varphi_m^{\star} \Biggl( \sqrt{ \frac {2 t_m}{\varepsilon^2 n}
} \Biggr) ,\sqrt{\frac{2 t_m}{n}} \Biggr\} + \frac{t_m}{3n} \Biggr\} + V_n.
\end{equation}
It now remains to upperbound $V_n$.

Let $m,m^{\prime}\in\mathcal{M}_n$. Since models $\mathcal{F}_m$
are nested, two cases can
occur:
\begin{enumerate}
\item$\mathcal{F}_m \subset\mathcal{F}_{m^{\prime}}$, which
implies $t_m \leq t_{m^{\prime}}$ and
$\varphi_m \geq\varphi_{m^{\prime}}$, hence $\varphi_m^{\star}
\leq
\varphi_{m^{\prime}}^{\star}$. Using, in addition,
(\ref{eq.maj.v-margin-loc}) and that $\varphi_{m^{\prime}}^{\star}$ is
non-decreasing , we have
\begin{eqnarray*}
v(m)\leq\min\Biggl\{\sqrt{ \frac{2 t_{m^{\prime}}}{n} } ,\varphi
_{m^{\prime} }^{\star} \Biggl( \sqrt{ \frac{2 t_{m^{\prime}}}{
\varepsilon^2 n}} \Biggr) \Biggr\} +
\varepsilon P( f_m - f^{\star}).
\end{eqnarray*}

\item$\mathcal{F}_{m^{\prime}} \subset\mathcal{F}_{m}$. Using
(\ref{eq.nested.min-pen}) and
(\ref{eq.maj.v-margin-loc}),
\begin{eqnarray*}
v(m) &\leq& C_1 v(m^{\prime}) + C_2 P( f_{m^{\prime}} - f^{\star}) +
c \operatorname{pen}(m) \\
&\leq& C_1 \min\Biggl\{\sqrt{ \frac{2 t_{m^{\prime}}}{n} } ,\varphi
_{m^{\prime}}^{\star} \Biggl( \sqrt{ \frac{2 t_{m^{\prime}}}{
\varepsilon^2 n}} \Biggr)\Biggr\} + (C_2 + C_1 \varepsilon) P(
f_{m^{\prime}} - f^{\star}) + c \operatorname{pen}(m).
\end{eqnarray*}
\end{enumerate}
Therefore,
\[
V_n \leq\frac{1}{1-\varepsilon} \inf_{m^{\prime}\in\mathcal
{M}_n} \Biggl\{ \max \{ 1, C_1 \} \min\Biggl\{\sqrt{ \frac{2
t_{m^{\prime}}}{n} } ,\varphi _{m^{\prime}}^{\star} \Biggl( \sqrt{ \frac
{2 t_{m^{\prime}}}{ \varepsilon^2 n}} \Biggr) \Biggr\} + (C_2 + C_1
\varepsilon) P( f_{m^{\prime}} - f^{\star}) \Biggr\}
\]
and the result follows.
\end{pf*}

\begin{pf*}{Proof of Corollary \ref{cor.nested.locRad}}
From \cite{Kol2006} (Theorem 1 and (9.2) in the proof of its Lemma 2),
we know that there exist numerical constants $\overline{K}>0$ and
$q>1$ such that (\ref{eq.pen.minor.nested}) holds with $t_m = t$, $c =
5/7$ and $\eta= (L+1) \ln_q ( \frac{n}{t} ) \operatorname
{Card}(\mathcal{M}_n) \mathrm{e}^{-t}$.

In addition, Lemma \ref{le.min.loc-Rad.diam} below shows that
(\ref{eq.nested.min-pen}) holds with $C_1 = \sqrt{2}$ and $C_2 =
2/(\overline{K} q)$.

The result follows from Theorem \ref{th.nested} with $t_m = t$.
\end{pf*}
\setcounter{lemma}{2}
\begin{lemma} \label{le.min.loc-Rad.diam}
Let $\mathcal{F}_{m^{\prime}}\subset\mathcal{F}_m $ and $\overline
{\delta}_n$ be defined
by (\ref{eq.loc-comp.ideal}). Then,
%e22 ###
%
\begin{equation} \label{eq.min.loc-Rad.diam}
v(m) \leq2 \overline{\delta}_n(\mathcal{F}_m ; t) + \sqrt{2}
v(m^{\prime}) +
\frac{2 P( f_{m^{\prime}} - f^{\star})}{q\overline{K}} .
\end{equation}
\end{lemma}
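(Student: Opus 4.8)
The plan is to bound $v(m)$ by the $L^2(P)$‑diameter of a suitably small minimal set of $\F_m$, and then to use that $\overline{\delta}_n(\F_m;t)$ is (essentially) a fixed point of $U_n(\F_m;\cdot;t)$, which is exactly the quantity controlling that diameter. Set $\delta_0 \egaldef P(f_{\mpr}) - \inf_{g \in \F_m} P(g) = P(f_{\mpr} - f_m)$. Since $\fs$ minimizes $P$ over $\F \supset \F_m \supset \F_{\mpr}$ we have $P(f_m - \fs) \geq 0$, hence $0 \leq \delta_0 = P(f_{\mpr} - \fs) - P(f_m - \fs) \leq P(f_{\mpr} - \fs)$. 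Both $f_m$ (trivially) and $f_{\mpr}$ (because $P(f_{\mpr}) - \inf_{\F_m} P = \delta_0$) belong to the minimal set $\F_{m,P}(\delta_0)$, so $P\paren{(f_m - f_{\mpr})^2} \leq D_P^2(\F_m;\delta_0)$. On the other hand $\sqrt{\var_P(\cdot)}$ is a seminorm (Minkowski applied to the centered variables), so, using also $\var_P(Z) \leq P(Z^2)$,
\[
\sqrt{\var_P(f_m - \fs)} \leq \sqrt{\var_P(f_m - f_{\mpr})} + \sqrt{\var_P(f_{\mpr} - \fs)} \leq D_P(\F_m ; \delta_0) + \sqrt{\var_P(f_{\mpr} - \fs)} \enspace .
\]
Multiplying by $\sqrt{2t/n}$ and recognizing $v(\mpr) = \sqrt{2t \var_P(f_{\mpr} - \fs)/n}$ (recall $t_m \equiv t$ here), the lemma is reduced to proving $\sqrt{2t/n}\, D_P(\F_m;\delta_0) \leq 2\overline{\delta}_n(\F_m;t) + 2 P(f_{\mpr}-\fs)/(q\overline{K})$; in fact this even gives coefficient $1$ rather than $\sqrt2$ in front of $v(\mpr)$, the weaker statement being kept for later convenience.

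To prove this, put $\delta_\star \egaldef \max\set{ P(f_{\mpr}-\fs) \, , \, \overline{\delta}_n(\F_m;t) }$, so $\delta_0 \leq \delta_\star$ and $\delta_\star \geq \overline{\delta}_n(\F_m;t)$. By the definition \eqref{eq.loc-comp.ideal} of $\overline{\delta}_n$ (and monotonicity of $U_n(\F_m;\sigma;t)/\sigma$ averaged over $\sigma\ge\delta$), every $\delta > \overline{\delta}_n(\F_m;t)$ satisfies $U_n(\F_m;\delta;t) \leq \delta/(2q)$; since $\phi_n \geq 0$ and $t/n \geq 0$ this yields $\overline{K}\, D_P(\F_m;\delta)\sqrt{t/n} \leq \delta/(2q)$, i.e. $D_P(\F_m;\delta) \leq \delta\sqrt{n/t}/(2q\overline{K})$. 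Letting $\delta \downarrow \delta_\star$ along $\delta > \overline{\delta}_n(\F_m;t)$ and using that $D_P(\F_m;\cdot)$ is nondecreasing, we get $D_P(\F_m;\delta_0) \leq D_P(\F_m;\delta_\star) \leq \delta_\star\sqrt{n/t}/(2q\overline{K})$, hence
\[
\sqrt{\frac{2t}{n}}\, D_P(\F_m;\delta_0) \leq \frac{\delta_\star}{\sqrt{2}\, q\overline{K}} \leq \frac{\overline{\delta}_n(\F_m;t)}{\sqrt{2}\, q\overline{K}} + \frac{P(f_{\mpr}-\fs)}{\sqrt{2}\, q\overline{K}} \enspace .
\]
Since the numerical constants $\overline{K},q$ are only required to be large enough, we may assume $q\overline{K} \geq 1/(2\sqrt{2})$, so that $1/(\sqrt{2}\,q\overline{K}) \leq 2$; this bounds the first term by $2\overline{\delta}_n(\F_m;t)$ and the second by $2 P(f_{\mpr}-\fs)/(q\overline{K})$, which combined with the reduction above gives \eqref{eq.min.loc-Rad.diam}.

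The only genuinely delicate point is the passage from the infimum defining $\overline{\delta}_n$ to the pointwise bound $U_n(\F_m;\delta;t)\leq\delta/(2q)$ including at the boundary $\delta=\overline{\delta}_n(\F_m;t)$: directly one only obtains it for $\delta>\overline{\delta}_n(\F_m;t)$, and the limiting step is handled as above by monotonicity of $\delta\mapsto D_P(\F_m;\delta)$. This is exactly the kind of fixed‑point property recorded in Theorems~1 and~3 of \cite{Kol:2006}, which we quote; everything else—identifying the minimal set containing both $f_m$ and $f_{\mpr}$, the seminorm triangle inequality, and the bookkeeping on the two regimes ($P(f_{\mpr}-\fs)$ below or above $\overline{\delta}_n(\F_m;t)$) and on the numerical constants—is elementary.
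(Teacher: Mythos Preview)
Your proof is correct and follows the same high-level plan as the paper—bound $\sqrt{\var_P(f_m-\fs)}$ by the $L^2(P)$-diameter of a minimal set of $\F_m$ plus $\sqrt{\var_P(f_{m'}-\fs)}$, then control that diameter through the fixed-point definition of $\overline{\delta}_n$—but your execution differs in two ways. First, you use Minkowski's inequality for the seminorm $\sqrt{\var_P(\cdot)}$, whereas the paper uses the cruder bound $\var(X)\le 2\var(X+Y)+2\var(Y)$; this is why you obtain coefficient $1$ in front of $v(m')$ where the paper genuinely needs $\sqrt{2}$. Second, instead of isolating a separate technical lemma (Lemma~\ref{le.min.loc-Rad.diam.tech}) that argues by contradiction (if $D_0\sqrt{t/n}$ were too large, a point $\sigma_1$ above the fixed point would still have $U_n(\sigma_1)/\sigma_1>1/(2q)$), you read the inequality $\overline{K}D_P(\F_m;\delta)\sqrt{t/n}\le \delta/(2q)$ directly off the definition for $\delta>\overline{\delta}_n$ and pass to the limit by monotonicity of $D_P(\F_m;\cdot)$; this is more transparent. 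The one place where you are slightly less self-contained is the assumption $q\overline{K}\ge 1/(2\sqrt{2})$ to turn $1/(\sqrt{2}q\overline{K})$ into $2$; but note that the paper's own Lemma~\ref{le.min.loc-Rad.diam.tech} implicitly needs $q\overline{K}$ bounded below as well (the step $\overline{K}D_0/\sigma_1\cdot\sqrt{t/n}\ge 1/q$ with $\sigma_1=\max\{q\overline{K},1\}D_0\sqrt{t/n}$ requires $q\overline{K}\ge 1$ or at least $>1/2$), and since $q>1$ and $\overline{K}$ is the constant from Talagrand-type bounds this is harmless in context.
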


\begin{pf}
Since $\mathcal{F}_{m^{\prime}}\subset\mathcal{F}_m $,
$f_{m^{\prime}} \in\mathcal{F}_m$ (as
well as $f_m$), so that
\begin{eqnarray}\label{le.min.loc-Rad.diam.tmp1}
D_P \bigl( \mathcal{F}_m ; P( f_{m^{\prime}} - f_m)\bigr) &\geq&\sqrt{ P( f_m -
f_{m^{\prime}} )^2}
\geq\sqrt{\operatorname{var}_P ( f_m - f_{m^{\prime}} )} \nonumber\\
[-8pt]\\ [-8pt]
&\geq&\sqrt{\frac{\operatorname{var}_P ( f_m - f^{\star})}{2}} -
\sqrt{\operatorname{var}_P
( f_{m^{\prime}} - f^{\star})}.\nonumber
\end{eqnarray}
For the last inequality, we used that $\operatorname{var}(X) \leq2
\operatorname{var}(X+Y) + 2
\operatorname{var}(Y)$ for any random variables $X,Y$, and the
inequality $\sqrt{x+y}
\leq\sqrt{x} + \sqrt{y}$ for every $x, y \geq0$.

First, assume that the lower bound in (\ref{le.min.loc-Rad.diam.tmp1})
is non-positive. This implies
\[
v(m) = \sqrt{\frac{2 t}{n} \operatorname{var}_P ( f_m - f^{\star})}
\leq\sqrt
{2} v( m^{\prime} ),
\]
so that (\ref{eq.min.loc-Rad.diam}) holds.

Otherwise, the assumptions of Lemma \ref{le.min.loc-Rad.diam.tech}
below hold with
\[
D_0 = \sqrt{\frac{\operatorname{var}_P ( f_m - f^{\star})}{2}} -
\sqrt{\operatorname{var}_P
( f_{m^{\prime}} - f^{\star})} > 0
\]
and
\[
 \sigma_0
= P( f_{m^{\prime}} - f_m).
\]
We deduce from (\ref{eq.min.loc-Rad.diam.tech}) that
\[
\frac{v(m)}{2} - \frac{v(m^{\prime})}{\sqrt{2}} \leq\overline
{\delta}_n(\mathcal{F}_m ; t) + \frac{P( f_{m^{\prime}} -
f_m)}{q\overline
{K}} \leq\overline{\delta}_n(\mathcal{F}_m ; t) + \frac{P(
f_{m^{\prime}}
- f^{\star})}{q\overline{K}} ,
\]
and (\ref{eq.min.loc-Rad.diam}) also holds.
\end{pf}

\begin{lemma} \label{le.min.loc-Rad.diam.tech}
Let $\overline{\delta}_n(\mathcal{F}_m ; t)$ be defined by
(\ref{eq.loc-comp.ideal}). Assume that there is some $D_0,\sigma_0>0$
such that $D_P(\mathcal{F}_m ; \sigma_0) \geq D_0$. Then, we have the
following
lower bound:
%e23 ###
%
\begin{equation} \label{eq.min.loc-Rad.diam.tech}
\max\biggl\{ \overline{\delta}_n(\mathcal{F}_m ; t) ; \frac{\sigma_0}{q
\overline{K}} \biggr\} \geq D_0 \sqrt{ \frac{t}{n}} .
\end{equation}
\end{lemma}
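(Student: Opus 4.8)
The plan is to read off everything from the fixed-point characterisation \eqref{eq.loc-comp.ideal} of $\overline{\delta}_n(\F_m ; t)$, combined with the monotonicity of $\delta \mapsto D_P(\F_m ; \delta)$. First I would record the elementary consequence of \eqref{eq.loc-comp.ideal}: the set $\set{ \delta > 0 \telque \sup_{\sigma \geq \delta} U_n (\F_m ; \sigma ; t)/\sigma \leq 1/(2q)}$ is upward closed (enlarging $\delta$ shrinks the range of the supremum) and has infimum $\overline{\delta}_n(\F_m ; t)$, so it contains the open interval $\paren{\overline{\delta}_n(\F_m ; t), \infty}$. Taking $\sigma' = \sigma$ in the supremum, this gives: for every $\sigma > \overline{\delta}_n(\F_m ; t)$, $U_n (\F_m ; \sigma ; t) \leq \sigma/(2q)$. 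I deliberately keep the strict inequality $\sigma > \overline{\delta}_n(\F_m ; t)$ and pass to a monotone limit only at the very end, because $\sigma \mapsto U_n(\F_m;\sigma;t)/\sigma$ need not be continuous (the quantities $\phi_n$ and $D_P$ may jump), so I do not want to assume the infimum in \eqref{eq.loc-comp.ideal} is attained.

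Next I would produce a matching lower bound on $U_n$. From its definition and the non-negativity of $\phi_n(\F_m ; P ; \sigma)$ and $t/n$, we have $U_n (\F_m ; \sigma ; t) \geq \overline{K} \, D_P(\F_m ; \sigma) \sqrt{t/n}$. Moreover $\delta \mapsto D_P(\F_m ; \delta)$ is non-decreasing, since a larger $\delta$-minimal set has a larger $L^2(P)$ diameter; hence $D_P(\F_m ; \sigma) \geq D_P(\F_m ; \sigma_0) \geq D_0$ for every $\sigma \geq \sigma_0$. Putting the two bounds together, for every $\sigma > \max\set{\sigma_0 , \overline{\delta}_n(\F_m ; t)}$,
\[ \overline{K}\, D_0 \sqrt{\frac{t}{n}} \;\leq\; \overline{K}\, D_P(\F_m ; \sigma)\sqrt{\frac{t}{n}} \;\leq\; U_n (\F_m ; \sigma ; t) \;\leq\; \frac{\sigma}{2q} \enspace . \]
Letting $\sigma$ decrease to $\max\set{\sigma_0 , \overline{\delta}_n(\F_m ; t)}$ then yields $\max\set{\sigma_0 , \overline{\delta}_n(\F_m ; t)} \geq 2q\overline{K}\, D_0 \sqrt{t/n}$.

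Finally I would split into two cases according to which term realises this maximum. If it is $\overline{\delta}_n(\F_m ; t)$, then $\overline{\delta}_n(\F_m ; t) \geq 2q\overline{K}\, D_0 \sqrt{t/n} \geq D_0 \sqrt{t/n}$ (recall $q > 1$ and that the numerical constant $\overline{K}$ can be taken with $2q\overline{K} \geq 1$). If it is $\sigma_0$, then $\sigma_0/(q\overline{K}) \geq 2 D_0 \sqrt{t/n} \geq D_0 \sqrt{t/n}$. In either case $\max\set{\overline{\delta}_n(\F_m ; t) ; \sigma_0/(q\overline{K})} \geq D_0 \sqrt{t/n}$, which is \eqref{eq.min.loc-Rad.diam.tech}. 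I expect the only genuinely delicate point to be the possible lack of continuity or attainment in \eqref{eq.loc-comp.ideal}, which is exactly why the argument goes through $\sigma > \max\set{\sigma_0 , \overline{\delta}_n(\F_m ; t)}$ and a monotone limit rather than by substituting $\sigma_0$ or $\overline{\delta}_n(\F_m ; t)$ directly; the rest is a short chain of inequalities.
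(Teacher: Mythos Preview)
Your proof is correct and follows essentially the same route as the paper: both combine the lower bound $U_n(\F_m;\sigma;t) \geq \overline{K}\, D_P(\F_m;\sigma)\sqrt{t/n}$, the monotonicity of $\delta \mapsto D_P(\F_m;\delta)$, and the fixed-point definition \eqref{eq.loc-comp.ideal} of $\overline{\delta}_n$. The paper splits into cases first and plugs in an explicit test point $\sigma_1 = \max\{q\overline{K},1\}\, D_0\sqrt{t/n}$ rather than taking a monotone limit, but the underlying argument---including the implicit reliance on $2q\overline{K} \geq 1$, which is harmless since $\overline{K}$ and $q$ are free numerical constants---is the same.
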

\begin{pf}
First, (\ref{eq.min.loc-Rad.diam.tech}) clearly holds when
$\frac{\sigma_0}{q \overline{K}} \geq D_0 \sqrt{t/n}$.
Otherwise, let $\sigma_1 = \max\{ q \overline{K} , 1 \}\break D_0
\sqrt{t/n} > \sigma_0$.
From the definition of $U_n $, we have
\[
\frac{U_n (\mathcal{F}_m ; \sigma_1 ; t)}{\sigma_1} \geq\frac
{\overline
{K} D_P(\mathcal{F}_m ; \sigma_1)}{\sigma_1} \sqrt{ \frac{t}{n} }
\geq
\frac{\overline{K} D_0}{q \overline{K} D_0 \sqrt{t/n}} \sqrt{
\frac{t}{n} } = \frac{1}{q} > \frac{1}{2q} .
\]
Then, according to the definition (\ref{eq.loc-comp.ideal}) of
$\overline{\delta}_n(\mathcal{F}_m ; t)$, $\overline{\delta
}_n(\mathcal{F}_m ; t)
\geq
\sigma_1 \geq D_0 \sqrt{t/n}$ and the result follows.
\end{pf}

%s7.2 ###
\subsection{\texorpdfstring{Lower bounds (proof of Theorem \protect\ref{th.non-nested.new})}%
{Lower bounds (proof of Theorem 2}}
\label{sec.proofs.lower}
For every $m\in\{ 0,1 \} $, let $f_m \dvtx (x,y) \mapsto\mathbh{1}_{y
\neq
m} $; $f_m \in\mathcal{F}^{0-1}$, since $f_m = \gamma( u_m ; \cdot)$,
where for every $x \in\mathcal{X}$, $u_m(x)=m $.
Let $\alpha=(2n)^{-1}$ and $h=(2n)^{-1/2} $. Let $a \neq b$ be any two
elements of $\mathcal{X}$. We define a probability distribution $P_1$
on $\mathcal{X}
\times\{ 0,1 \}$ as follows: if $(X,Y)\sim P_1 $, then
$\mathbb{P}(X=a)=\alpha$, $\mathbb{P}(X=b)=1-\alpha$, $\mathbb{P}(
Y=1 \vert
X=a )=0$ and $\mathbb{P}( Y=1 \vert X=b )=\frac{1}{2}+h $.
We also define $P_0$ as the distribution of $(X,1-Y),$ where $(X,Y)\sim
P_1 $. In the following, for any distribution $Q$ on $\mathcal
{X}\times
\{ 0,1 \} $, we use the notation $\mathbb{P}_Q$ as a shortcut for
$\mathbb{P}_{(X_i,Y_i)_{1 \leq i \leq n} \sim Q^{\otimes n}}$.

First, under distribution $P_1 $, the Bayes predictor is
$s=\mathbh{1}_{b} $,
\[
P_1 ( f_0 - f^{\star}) = 2(1-\alpha)h,\qquad
P_1 ( f_1 - f^{\star}) = \alpha
\quad\mbox{and}\quad \operatorname{var}_{P_1} ( f_1 - f^{\star}) = \alpha-
\alpha^2.
\]
Hence,
\begin{eqnarray*}
&& \min_{m \in\{ 0,1 \}} \biggl\{ P_1 ( f_m - f^{\star}) + \overline{v}
(m) + \frac{\ln(n)}{n h_m} \biggr\} \\
&&\quad\leq P_1(f_1-f^{\star})+\overline{v}(1)+ \frac{\ln(n)}{n h_1} \leq
\alpha+ \sqrt{
\frac{2 \alpha\ln(n)}{n}} + \frac{\ln(n)}{n} \leq\frac{2+3\ln(n)}{2n}.
\end{eqnarray*}
Therefore, if $\mathbb{P}_{P_1} ( \widehat{m}=0 ) \geq C_3 $, then
(\ref{eq.non-nested.minimax.prob}) holds when $P=P_1 $, with
$C_4=1/3 $.
Similarly, $\mathbb{P}_{P_0} ( \widehat{m}=1 ) \geq C_3$ implies
(\ref{eq.non-nested.minimax.prob}) with $P=P_0$ and $C_4=1/3 $.
So, in order to prove (\ref{eq.non-nested.minimax.prob}), we only need
to prove that
%e24 ###
%
\begin{equation} \label{eq.non-nested.minimax.suff} \max_{j \in\{
0,1 \}} \{ \mathbb{P}_{P_j} ( \widehat{m}=1-j ) \} \geq C_3 > 0.
\end{equation}

The proof of (\ref{eq.non-nested.minimax.suff}) relies on three main
facts. First,
%e25 ###
%
\begin{equation} \label{eq.non-nested.minimax.proba-X}
\forall j \in
\{ 0,1 \} \qquad \mathbb{P}_{P_j} ( \forall i , X_i = b ) =
( 1 - \alpha)^n = \biggl( 1 - \frac{1}{2n} \biggr)^{n} \geq\frac
{1}{2} .
\end{equation}

Second, for every $j \in\{ 0,1 \} $, under $P_j $, conditionally to
$\{ \forall i , X_i = b \} $, $\operatorname{Card}\{ i \mbox{ s.t.
}Y_i = 1 \}$ is
a binomial random variable with parameters $(n,p_j) $, where
\[
p_j = \mathbb{P}_{(X,Y) \sim P_j} ( Y = 1 ) = \frac{1}{2} +
(-1)^{j+1} h.
\]
So, Lemma \ref{le.binom.minor} shows that for every $j \in\{ 0,1 \}$
and every $k \in\mathbb{N}\cap[ \frac{n}{2} - \sqrt{n} , \frac
{n}{2} + \sqrt{n} ] $,
%
%e26 ###
%
\begin{equation} \label{eq.non-nested.minimax.binom-minor-p}
\mathbb{P}_{P_j} ( \operatorname{Card}\{ i \mbox{ s.t. }Y_i = 1 \}
= k \vert\forall i ,
X_i = b ) \geq\frac{C}{\sqrt{n}} > 0,
\end{equation}
where $C$
is an absolute constant.

Third, let us define, for every $k \in\{ 0, \ldots, n \} $,
\[
\pi_k :=\mathbb{P}_{P_U} \bigl( \widehat{m}( (X_i,Y_i)_{1 \leq i \leq n}
) = 1
\vert\operatorname{Card}\{ i \mbox{ s.t. }Y_i = 1 \} = k \mbox{
and } \forall i ,
X_i=b \bigr),
\]
where $P_U$ is the uniform distribution on $\{ a,b \} \times
\{ 0,1 \}$. A crucial remark is that $P_U$ can be replaced by either
$P_0$ or $P_1$ in the definition of $\pi_k $, since the conditioning
event determines $(X_i,Y_i)_{1\leq i \leq n}$ up to the ordering of the
observations; in the definition of $\pi_k $, the probability only
refers to the ordering of the $(X_i,Y_i) $, and any product measure on
$\mathcal{X}\times\{ 0,1 \}$ assigns equal probabilities to the $n!$
permutations of the $n$ observations.
Note also that the definition of $\pi_k$ stays valid when $\widehat
{m}$ is a
randomized selection rule, which proves the generalization of Theorem~\ref{th.non-nested.new} pointed out in Remark
\ref{rk.non-nested.minimax.random}.
For any given selection rule $\widehat{m}$,
\[
\operatorname{Card}\biggl\{ k \in\mathbb{N}\cap\biggl[ \frac{n}{2} - \sqrt{n}
, \frac{n}{2} + \sqrt{n} \biggr] \mbox{ s.t. }\pi_k > \frac{1}{2} \biggr\}
\]
is either larger or smaller than $\sqrt{n} $.
If it is larger, (\ref{eq.non-nested.minimax.proba-X}),
(\ref{eq.non-nested.minimax.binom-minor-p}) and the definition of the
$\pi_k$ (with $P_0$ instead of $P_U$) show that
\[
\mathbb{P}_{P_0} \bigl( \widehat{m}( (X_i,Y_i)_{1 \leq i \leq n} ) = 1 \bigr)
\geq\sqrt{n} \times\frac{C}{\sqrt{n}} \times\frac{1}{2} = \frac
{C}{2} = C_3 > 0,
\]
so that (\ref{eq.non-nested.minimax.suff}) is satisfied.
Otherwise, choosing $P_1$ instead of $P_0$ shows that
(\ref{eq.non-nested.minimax.suff}) holds true.
This proves (\ref{eq.non-nested.minimax.prob}), which clearly implies
(\ref{eq.non-nested.minimax.esp}), since $P(f_{\widehat{m}}-f^{\star
})\geq0$ a.s.

A key tool in the proof of Theorem \ref{th.non-nested.new} is the
following uniform lower bound on the density of the binomial
distribution w.r.t. the counting measure on $\mathbb{N}$.
\begin{lemma} \label{le.binom.minor}
For every $n \in\mathbb{N}$ and $p \in[0,1]$, let $\mathcal
{B}(n,p)$ denote
the binomial distribution with parameters $(n,p)$.
For every $a,b > 0$ and $c \in(0,1/2)$, a positive constant $C(a,b,c)$
exists such that for any positive integer $n $,
%e27 ###
%
\begin{equation} \label{eq.binom.minor}
\mathop{\inf_{k\in\mathbb{N}, | k - n/2 | \leq\min\{ a
n^{1/2},n/2 \}}}_{| p - 1/2 | \leq\min\{ b
n^{-1/2}, c \} } \bigl\{ \sqrt{n} \mathbb{P}_{Z \sim\mathcal{B}(n,p)}( Z
= k ) \bigr\} \geq C (a,b,c) > 0.
\end{equation}
\end{lemma}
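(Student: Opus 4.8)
The plan is to treat small and large sample sizes separately. Fix $a,b>0$ and $c\in(0,1/2)$ and pick an integer $n_0$ large enough that, for every $n>n_0$, one has $an^{1/2}\le n/2$, $an^{-1/2}\le 1/4$, and $\min\set{bn^{-1/2},c}=bn^{-1/2}\le 1/4$; then every pair $(k,p)$ admissible in \eqref{eq.binom.minor} satisfies $k/n\in[1/4,3/4]$ and $p\in[1/4,3/4]$. For $n\le n_0$ there are only finitely many values to handle, and for each fixed such $n$ the admissible set is either empty --- in which case the infimum in \eqref{eq.binom.minor} equals $+\infty$ and there is nothing to prove --- or is the product of a finite set of integers $k$ with a closed subinterval of $(0,1)$ for $p$ (the latter because $c<1/2$). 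On this compact set the function $(k,p)\flapp \sqrt n\,\binom{n}{k}p^k(1-p)^{n-k}$ is continuous in $p$ and strictly positive, hence its infimum is attained and positive; the minimum of these finitely many positive numbers is a constant $C_0(a,b,c)>0$ handling the regime $n\le n_0$.

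For $n>n_0$ I would first reduce to the symmetric case. Writing $p=1/2+\delta$ with $\absj\delta\le bn^{-1/2}$,
\[
\frac{\Prob_{Z\sim\mathcal{B}(n,p)}(Z=k)}{\Prob_{Z\sim\mathcal{B}(n,1/2)}(Z=k)}=(1+2\delta)^{k}(1-2\delta)^{n-k}\enspace .
\]
Taking logarithms and using $\ln(1+u)=u-u^2/2+\mathrm{O}(u^3)$ (valid since $\absj{2\delta}\le 1/2$ here), the linear part of the exponent equals $2\delta(2k-n)$, whose modulus is at most $(2bn^{-1/2})(2an^{1/2})=4ab$ on the admissible range $\absj{k-n/2}\le an^{1/2}$; the quadratic part is $-2\delta^2 n$ up to lower order, hence bounded in modulus by a constant multiple of $b^2$; and the remainder is at most $k\absj{2\delta}^3=\mathrm{O}(n^{-1/2})$. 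Therefore this ratio is bounded below by $e^{-L(a,b)}$ for a constant $L(a,b)$ depending only on $a,b$. (Equivalently, without reducing to $p=1/2$, the product $\paren{np/k}^k\paren{n(1-p)/(n-k)}^{n-k}$ equals $\exp\paren{-n\,\mathrm{KL}\paren{\mathcal{B}(1,k/n)\,\|\,\mathcal{B}(1,p)}}$, and one bounds the Kullback--Leibler divergence by the corresponding $\chi^2$ divergence $(k/n-p)^2/(p(1-p))\le 16(a+b)^2/(3n)$, using $p\in[1/4,3/4]$ and $\absj{k/n-p}\le(a+b)n^{-1/2}$.)

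It then remains to bound $\sqrt n\,\binom{n}{k}2^{-n}$ from below uniformly over $\absj{k-n/2}\le an^{1/2}$ and $n>n_0$, which is the classical central-binomial estimate. Applying Stirling's formula with explicit error control, $m!=\sqrt{2\pi m}\,(m/e)^m e^{\theta_m}$ with $0<\theta_m\le 1/(12m)$, gives for $n/4\le k\le 3n/4$
\[
\binom{n}{k}2^{-n}=\sqrt{\frac{n}{2\pi k(n-k)}}\;\paren{\frac{n}{2k}}^{k}\paren{\frac{n}{2(n-k)}}^{n-k}e^{\theta}\enspace ,
\]
where $\theta=\theta_n-\theta_k-\theta_{n-k}\in[-2/(3n),1/(12n)]$ is bounded. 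The square-root prefactor lies between two absolute constant multiples of $n^{-1/2}$, and writing $k=n/2+u$ with $\absj u\le an^{1/2}$, a second-order expansion of the logarithm of the remaining product shows it equals $\exp\paren{-2u^2/n+\mathrm{O}(n^{-1/2})}\ge e^{-2a^2-1}$ for $n>n_0$. Altogether $\sqrt n\,\binom{n}{k}2^{-n}\ge C_1(a)>0$, so $\sqrt n\,\Prob_{Z\sim\mathcal{B}(n,p)}(Z=k)\ge C_1(a)e^{-L(a,b)}$; taking $C(a,b,c)$ to be the minimum of this quantity and $C_0(a,b,c)$ finishes the proof.

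The only slightly delicate point --- the ``main obstacle'' --- is the bookkeeping in the logarithmic expansions: each logarithm gets multiplied by a factor of order $n$ while its argument differs from $1$ by something of order $n^{-1/2}$, so the naive bound on the exponent is of order $n^{1/2}$. One must check that the order-$n^{1/2}$ contributions (the ``linear'' terms) cancel exactly, or are controlled by the product of the two scalings $\absj{k-n/2}=\mathrm{O}(n^{1/2})$ and $\absj{p-1/2}=\mathrm{O}(n^{-1/2})$, leaving an $\mathrm{O}(1)$ exponent. Everything else --- the Stirling error terms, the cubic remainders, the compactness argument for $n\le n_0$, and the possibility that the admissible set is empty --- is routine.
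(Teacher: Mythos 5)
Your proof is correct, and it uses the same two basic tools as the paper (Stirling's formula with explicit error control for $n$ large, plus a compactness/finiteness argument for $n\le n_0$), but the decomposition is genuinely different. The paper expands $\ln\Prob(Z=k)$ directly in the two small parameters $\eta=2k/n-1$ and $\delta=p-1/2$ simultaneously, tracking remainders with an auxiliary function $h(x)=x^{-1}\ln(1+x)-1$ and the bound $\absj{h(x)}\le 2\absj{x}$. You instead factor through the symmetric case: first bound the ratio
\[
\frac{\Prob_{\mathcal{B}(n,p)}(Z=k)}{\Prob_{\mathcal{B}(n,1/2)}(Z=k)}=(1+2\delta)^k(1-2\delta)^{n-k}\ge e^{-L(a,b)}
\]
(the linear-in-$\delta$ term in the exponent is $2\delta(2k-n)=\mathrm{O}(ab)$ because the two scalings $\absj{k-n/2}=\mathrm{O}(\sqrt n)$ and $\absj{\delta}=\mathrm{O}(1/\sqrt n)$ cancel, the quadratic term is $-2\delta^2 n=\mathrm{O}(b^2)$, and the cubic remainder is $\mathrm{O}(n^{-1/2})$), and then invoke the standard central-binomial lower bound $\sqrt n\,\binom{n}{k}2^{-n}\ge C_1(a)$ for $\absj{k-n/2}\le a\sqrt n$. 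Your two-step factorization is more modular and easier to audit than the paper's single joint expansion, and your parenthetical reformulation in terms of $\exp\paren{-n\,\mathrm{KL}}$ bounded via a $\chi^2$ divergence gives a particularly clean closed form; the paper's direct expansion has the (minor) advantage of producing a single explicit remainder $R(k,n,p)$ in one pass. Both arguments hinge on the same key observation you flag at the end, namely that the $\mathrm{O}(\sqrt n)$ contributions to the log must cancel or combine to $\mathrm{O}(1)$, and both handle small $n$ by finiteness/compactness (the paper also uses monotonicity of $p\mapsto\Prob(Z=k)$ on either side of $p=k/n$ to get uniformity in $p$; your continuity-on-a-compact-set argument is an equivalent way to close that case).
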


\begin{pf}
Let $n,k,p$ satisfy the above conditions, $Z \sim\mathcal{B}(n,p)$,
and define
\[
\eta:=\frac{2k}{n} - 1, \qquad \delta:=p - \frac{1}{2} .
\]
The assumption on $k$ and $p$ becomes $| \eta |
\leq\min\{ a n^{-1/2},1/2 \}$ and $| \delta | \leq\min\{ b
n^{-1/2},c \}$.
In addition,
\[
\mathbb{P}( Z = k ) = p^k (1-p)^{n-k} {n\choose k} = \biggl( \frac{1}{2} +
\delta\biggr)^k \biggl( \frac{1}{2} + \delta\biggr)^{n-k} \frac{n!
}{k! (n-k)!} .
\]

We now use Stirling's formula:
\[
\ln(n!) = n \ln(n) - n + \frac{1}{2} \ln( 2 \uppi n ) + \varepsilon
_n
\]
for some sequence $\varepsilon_n \rightarrow0$ when $n \rightarrow+
\infty$ (one has $(12 n+1)^{-1} \leq\varepsilon_n \leq(12 n)^{-1}$).
Then,
\begin{eqnarray*}
\ln\mathbb{P}( Z = k ) &=& k \ln\biggl( \frac{1}{2} + \delta\biggr) +
(n-k) \ln\biggl( \frac{1}{2} - \delta\biggr) + \ln\frac{n!}{k! (n-k)!}
\\
&=& \frac{n}{2} \biggl[ (1 - \eta) \ln\biggl( \frac{1 - 2 \delta}{1 - \eta} \biggr)
+ (1 + \eta) \ln\biggl( \frac{1 + 2 \delta}{1 + \eta} \biggr) \biggr] \\
&&{} - \frac{1}{2} \ln(n) + \frac{1}{2} \ln\biggl( \frac{2}{\uppi} \biggr) -
\frac{1}{2} \ln( 1 - \eta^2 ) + \varepsilon_n - \varepsilon
_k -
\varepsilon_{n-k} .
\end{eqnarray*}
Define $h \dvtx (-1 , + \infty) \mapsto\mathbb{R}$ by $h(x) :=x^{-1}
\ln(1+x) - 1$, so that
\[
\forall x > -1\qquad \ln(1 + x) = x \bigl( 1 + h(x) \bigr).
\]
Recall that $ | h(x) | \leq2| x |$ as soon as $x \geq- 1/2$, by
the Taylor--Lagrange formula. In particular, $\lim_{x \rightarrow0}
h(x) = 0$.
We then have
\begin{eqnarray*}
\ln\mathbb{P}( Z = k ) &=& \frac{n}{2} [ 4 \delta\eta- 2 \eta^2 - 2\delta(1 - \eta ) h(-2
\delta) + \eta(1-\eta) h(-\eta)\\
&&\hphantom{\frac{n}{2} [}{} + 2 \delta(1 + \eta) h(2 \delta)
- \eta(1 + \eta) h(\eta) ] \\
&&{} - \frac{1}{2} \ln(n) + \frac{1}{2} \ln\biggl( \frac{2}{\uppi} \biggr) +
\frac{\eta^2}{2} h(- \eta^2) + \varepsilon_n - \varepsilon_k -
\varepsilon_{n-k} .
\end{eqnarray*}
Assuming that $n \geq n_0$ such that $\max\{ a,b \} n^{-1/2} \leq1/2$,
it follows that
\[
\ln\mathbb{P}( Z = k ) = - \frac{1}{2} \ln(n) + R(k,n,p)
\]
with
\[
R(k,n,p) \geq L ( 1 + a^2 + ab + b^2 )
\]
for some numerical constant $L>0$, and this lower bound is uniform over
$n \geq n_0$ and $k, p$ such that the conditions of the infimum in
(\ref{eq.binom.minor}) are satisfied.
On the other hand,
\[
\inf_{n \leq n_0, 1 \leq k \leq n} \bigl\{ \mathbb{P}_{Z \sim\mathcal
{B}(n,p)}( Z = k ) \bigr\} \geq K(p) > 0
\]
as soon as $p \in(0,1)$. Since $\mathbb{P}_{Z \sim\mathcal
{B}(n,p)}( Z = k )$, seen as a function of $p$, is increasing on
$(0,k/n)$ and decreasing on $(k/n,1)$, $K(p) $ is uniformly larger than
$\min\{ K(1/2 - c),\break K(1/2 + c) \}$. The result follows.
\end{pf}

%s7.3 ###
\subsection{\texorpdfstring{Proof of Proposition \protect\ref{le.local.vs.global.margin}}%
{Proof of Proposition 1}}
\label{sec.pr.le.local.vs.global.margin}
Let $( x_j )_{j \in\mathbb{N}}$ be any infinite sequence of distinct
elements of $\mathcal{X}$ and $\lambda> 0$ to be chosen later. We
define $P$ as
follows, by denoting $(X,Y)$ a pair of random variables with joint
distribution $P $.
For every $k \in\mathbb{N}$, $\mathbb{P}( X = x_{2k} ) = p_k q_k$
and $\mathbb{P}( X =
x_{2k +1}) = p_k (1 - q_k) $, where $p_k = 2^{-k-1}$ and $q_k \in
[0,1]$ is to be chosen later; note that $\sum_{k \in\mathbb{N}} p_k
= 1 $.
For every $k \in\mathbb{N}$, $\mathbb{P}( Y = 1 \vert X =x_{2k} ) = 0$
and $\mathbb{P}( Y = 1 \vert X =x_{2k+1} ) = (1 + \delta_k)/2$
where $\delta_k = 2^{-k \lambda}$.
As a consequence, the Bayes predictor is $s:=
\mathbh{1}_{\{ x_{2k + 1} \mbox{ s.t. }k \in\mathbb{N} \}} $.
Let us define for every $j \in\mathbb{N}$,
\[
u_j (x) :=
\cases{
s(x), &\quad if  $x \neq x_j$, \cr
1 - s(x), &\quad if  $x = x_j$
}\quad
\mbox{and}\quad
f_j = \gamma( u_j;\cdot),
\]
where $\gamma$ is the 0--1 loss.
Then, for any $k \in\mathbb{N}$,
\begin{eqnarray} \label{eq.pr.le.local.vs.global.margin.1}
P ( f_{2k+1} - f^{\star}) &=& \delta_k p_k (1 - q_k),\qquad P ( f_{2k} -
f^{\star}) =p_k q_k, \\\label{eq.pr.le.local.vs.global.margin.2}
\operatorname{var}_P ( f_{2k+1} - f^{\star}) &=& p_k ( 1- q_k) - \bigl(\delta_k p_k (1 - q_k)
\bigr)^2,
\\\label{eq.pr.le.local.vs.global.margin.3}
\operatorname{var}_P (f_{2k} - f^{\star}) &=&p_k q_k - ( p_k q_k )^2.
\end{eqnarray}
We can now prove the four statements of Proposition
\ref{le.local.vs.global.margin}.
\begin{enumerate}[(iii)]
\item[(i)] By (\ref{eq.pr.le.local.vs.global.margin.1}), choosing $q_k
= \delta_k/(1+\delta_k)$ and $\lambda=\kappa-1>0$ implies (i) with
$b(k) = p_k q_k $.
\item[(ii)] For every $t \in(0,1) $,
%e28 ###
%
\begin{equation} \label{eq.local.vs.global.margin.tmp1}
\mathbb{P}\bigl( | 2 \eta(X) - 1 | \leq t \bigr) = \sum_{k \in\mathbb{N}}
\mathbb{P}(X=x_{2k+1}) \mathbh{1}_{\delta_k \leq t} \leq\sum_{k
\mbox{ s.t. }2^{ - k
\lambda} \leq t} 2^{-k-1} \leq t^{1/\lambda} .
\end{equation}
By Lemma 9 of \cite{BarJorMcA2006},
(\ref{eq.local.vs.global.margin.tmp1}) implies the global margin
condition over $\mathcal{F}^{0-1}$ with function $\varphi(x) = C_5
x^{2(\lambda+
1)} $, where $C_5$ only depends on $\lambda$. This implies the first
part of (ii) since $\lambda= \kappa- 1>0$.
For the second part, (\ref{eq.pr.le.local.vs.global.margin.2}) implies
that
\[
\operatorname{var}_P( f_{2k+1} - f^{\star}) \geq p_k (1-q_k) ( 1 -
p_k )
\geq\frac{ p_k (1-q_k) }{2} \geq\frac{p_k}{4} = 2^{-k-3} ,
\]
hence the second part of (ii) holds with $C_6 = C_5 2^{2-3\kappa}$.
\item[(iii)] By (\ref{eq.pr.le.local.vs.global.margin.3}), $
\operatorname{var}_P
(f_{2k} - f^{\star}) = p_k q_k (1 - p_k q_k) \leq p_k q_k = P(f_{2k} -
f^{\star})
$.
\item[(iv)] By (iii), for every $k \in\mathbb{N}$, a local margin condition
holds on $\mathcal{F}_{2k}$ with function $\varphi_{2k}\dvtx x \mapsto
x^2 $. So,
the right-hand side of a strong margin-adaptive oracle inequality is at
most (keeping only even values of $m$) proportional to
\[
\inf_{0 \leq k \leq M_n/2} \biggl\{ P(f_{2k} - f^{\star}) + \frac{\ln(n)}
{n} \biggr\}
\leq2^{-\ln_2(n)-1} + \frac{\ln(n)}{n} \leq\frac{2 \ln(n)}{n} .
\]
Note that the $\ln(n)$ factor may be replaced by a smaller quantity
depending on the framework.
The last statement on global margin adaptivity holds according to (ii),
since $\varphi^{\star}(x) = L(\kappa) x^{2\kappa/(2\kappa-1)}$, where
$L(\kappa)>0$ only depends on $\kappa$.
\end{enumerate}
\section*{Acknowledgements} The authors gratefully acknowledge the
support of the NSF under awards DMS-0434383 and DMS-0707060. The first
author's research was mostly carried out at Univ Paris-Sud (Laboratoire
de Mathematiques, CNRS -- UMR 8628), with the additional support of
Inria Saclay (Select Project). The authors would also like to thank an
anonymous referee for numerous comments that improved the presentation
and some of the results of the paper.

\printhistory

\end{document}